\providecommand{\U}[1]{\protect\rule{.1in}{.1in}}
\newtheorem{theorem}{Theorem}
\newtheorem{corollary}[theorem]{Corollary}
\newtheorem{lemma}[theorem]{Lemma}
\newtheorem{proposition}[theorem]{Proposition}
\newtheorem{remark}[theorem]{Remark}
\newenvironment{proof}[1][Proof]{\noindent\textbf{#1.} }{\ \rule{0.5em}{0.5em}}
\begin{document}

\title{Null-orbit reflexive operators}
\author{Don Hadwin\\University of New Hampshire
\and Ileana Iona\c{s}cu\\Philadelphia University
\and Hassan Yousefi\\California State University Fullerton}
\maketitle

\begin{abstract}
We introduce and study the notion of null-orbit reflexivity, which is a slight
perturbation of the notion of orbit-reflexivity. Positive results for orbit
reflexivity and the recent notion of $\mathbb{C}$-orbit reflexivity both
extend to null-orbit reflexivity. Of the two known examples of operators that
are not orbit-reflexive, one is null-orbit reflexive and the other is not. The
class of null-orbit reflexive operators includes the classes of hyponormal ,
algebraic, compact, strictly block-upper (lower) triangular operators, and
operators whose spectral radius is not 1. We also prove that every
polynomially bounded operator on a Hilbert space is both orbit-reflexive and
null-orbit reflexive.

\end{abstract}

\bigskip

\bigskip

\section{Introduction\bigskip}

In a recent paper \cite{MV} the authors and M. McHugh introduced a new notion
of reflexivity for operators, $\mathbb{C}$\emph{-orbit reflexivity} as well as
its linear-algebraic analogue. This notion is related to the notion of orbit
reflexivity \cite{HNRR}. Examples of Hilbert space operators that are not
orbit reflexive can be found in two very remarkable papers; the first example
was given by S. Grivaux and M. Roginskaya \cite{GR}, and the second, much
simpler, example was given by V. M\"{u}ller and J. Vr\v{s}ovsk\'{y} \cite{MV}.

Although even in finite-dimensions there is an ample supply of operators that
are not $\mathbb{C}$-orbit reflexive, it was easy to show that operators that
are strictly block-upper(or lower)-triangular are $\mathbb{C}$-orbit
reflexive. This fact combined with the example of a non-orbit-reflexive
operator in \cite{MV}, led us naturally to a new version of orbit reflexivity,
null-orbit reflexivity, that includes all of the previously-proved
orbit-reflexive operators but excludes the counterexample in \cite{MV}.

Suppose $T$ is a linear transformation on a vector space. We define the
\emph{null-orbit} of $T$ as%
\[
\mathrm{nullOrb}\left(  T\right)  =\left\{  0,1,T,T^{2},\ldots\right\}  .
\]
The \emph{orbit} of $T$ is \textrm{Orb}$\left(  T\right)  =\left\{
1,T,T^{2},\ldots\right\}  $. We define \textrm{null}$\mathrm{OrbRef}%
_{0}\left(  T\right)  $ to be the set of all linear transformations $S$ such
that for every vector $x$%

\[
Sx\in\mathrm{null}\text{-}\mathrm{Orb}\left(  T\right)  x
\]
and we say that $T$ is \emph{algebraically} \emph{null-orbit reflexive} if%
\[
\mathrm{nullOrbRef}_{0}\left(  T\right)  =\mathrm{\mathrm{nullOrb}}\left(
T\right)  \mathrm{.}%
\]
If $T$ is a bounded operator on a Banach space, we define $\mathrm{nullOrbRef}%
\left(  T\right)  $ to be the set of all operators $S$ such that, for every
vector $x$%
\[
Sx\in\left[  \mathrm{nullOrb}\left(  T\right)  x\right]  ^{-},
\]
and we say that $T$ is \emph{null-orbit reflexive} if $\mathrm{nullOrbRef}%
\left(  T\right)  $ is the strong-operator closure of $\mathrm{nullOrb}\left(
T\right)  $. Orbit reflexivity is defined as in the above definition replacing
$\mathrm{nullOrb}\left(  T\right)  $ with $\mathrm{Orb}\left(  T\right)  $.
The slight change in definitions causes drastic changes in the two notions.

In this paper we extend all of the positive known results for orbit
reflexivity to null-orbit reflexivity, and we show that most of the positive
results for $\mathbb{C}$-orbit reflexivity extend to null orbit reflexivity.
Moreover, for the example in \cite{MV} of a Hilbert space operator $T$, that
is not orbit reflexive, we show that $T$ is null-orbit reflexive. In the
example in \cite{GR} of a Hilbert space operator that is not orbit reflexive,
the proof shows that the operator is also not null-orbit reflexive.

We first prove a number of results in the purely algebraic case, and we use
these to prove several results for operators on a normed space or a Hilbert
space. We next extend the results of \cite{HNRR} and \cite{MV} to the
null-orbit reflexivity case. We finish with a new result that every
polynomially bounded operator on a Hilbert space is both orbit-reflexive and
null-orbit reflexive.

Suppose $X$ is a normed space and $\mathcal{A}$ is an algebra of (bounded
linear) operators on $X$. A (closed linear) subspace $M$ of $X$ is
$\mathcal{A}$\emph{-invariant} if $A\left(  M\right)  \subseteq M$ for every
$A\in\mathcal{A}$. We let \textrm{Lat}$\mathcal{A}$ denote the set of all
invariant subspaces for $\mathcal{A}$, and we let \textrm{AlgLat}$\mathcal{A}$
denote the algebra of all operators that leave invariant every $\mathcal{A}%
$-invariant subspace. The algebra $\mathcal{A}$ is \emph{reflexive} if
$\mathcal{A}=$\textrm{AlgLat}$\mathcal{A}$. If the algebra $\mathcal{A}$
contains the identity operator $1$, then $S\in$\textrm{AlgLat}$\mathcal{A}$ if
and only if, for every $x\in X,$ $Sx$ is in the closure of $\mathcal{A}x$.
This characterization works equally well for a linear subspace $\mathcal{S}$
of $B\left(  X\right)  $ (the set of all operators on $X$), i.e., we define
\textrm{ref}$\mathcal{S}$ to be the set of all operators $A$ such that, for
every $x\in X$, we have $Ax$ is in the closure of $\mathcal{S}x$, and we say
that $\mathcal{S}$ is \textrm{reflexive} if $\mathcal{S}=\mathrm{ref}%
\mathcal{S}$. If we let $T$ be a single operator and let $\mathcal{S}%
=\mathrm{Orb}\left(  T\right)  =\left\{  T^{n}:n\geq0\right\}  ,$ we apply the
same process to obtain the notion of orbit reflexivity. (Note that in this
case $\mathcal{S}$ is not a linear space.) We define \textrm{OrbRef}$\left(
T\right)  $ to be the set of all operators $A$ such that, for every vector
$x,$ we have $Ax$ is in the closure of Orb$\left(  T,x\right)  =$ Orb$\left(
T\right)  x$. We say that $T$ is \emph{orbit reflexive} if \textrm{OrbRef}%
$\left(  T\right)  $ is the closure of \textrm{Orb}$\left(  T\right)  $ in the
strong operator topology (SOT).

\section{Algebraic Results}

Throughout this section $\mathbb{F}$ will denote an arbitrary field, $X$ will
denote a vector space over $\mathbb{F}$, and $\mathcal{L}\left(  X\right)  $
will denote the algebra of all linear transformations on $X$.

A transformation $T\in\mathcal{L}\left(  X\right)  $ is \emph{locally
nilpotent} if $X=\cup_{n\geq1}\ker\left(  T^{n}\right)  $. More generally $T$
is \emph{locally algebraic} if, for each $x\in X$, there is a nonzero
polynomial $p_{x}\in\mathbb{F}\left[  t\right]  $ such that $p_{x}\left(
T\right)  x=0$. If $p_{x}\left(  t\right)  $ is chosen to be monic with
minimal degree, we call $p_{x}$ a \emph{local polynomial} for $T$ at $x$.

\bigskip

\begin{theorem}
\label{locnil}Every locally nilpotent linear transformation on a vector space
$X$ over field $\mathbb{F}$ is algebraically null-orbit reflexive. Moreover,
if $S\in\mathrm{nullOrbRef}_{0}\left(  T\right)  ,$ $x\in X$, and
$Sx=T^{k}x\neq0,$ then $S=T^{k}.$
\end{theorem}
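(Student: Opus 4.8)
The plan is to reduce everything to the single assertion in the ``Moreover'' clause, settle a ``generic position'' case cleanly, and then grind out the degenerate cases by hand. The inclusion $\mathrm{nullOrb}(T)\subseteq\mathrm{nullOrbRef}_0(T)$ is immediate from the definitions, so for the theorem it remains to show every $S\in\mathrm{nullOrbRef}_0(T)$ lies in $\mathrm{nullOrb}(T)$. If $S=0$ we are done; otherwise pick $x$ with $Sx\neq0$, and since $Sx\in\mathrm{nullOrb}(T)x=\{0,x,Tx,T^2x,\dots\}$ is nonzero we get $Sx=T^kx\neq0$ for some $k\geq0$. Thus the theorem (and the ``Moreover'' clause) follows once we show that $Sx=T^kx\neq0$ forces $S=T^k$. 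So from now on fix such $x,k$, fix an arbitrary $y\in X$, and aim to prove $Sy=T^ky$.

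I would first handle the case where the cyclic subspaces $\mathbb{F}[T]x$ and $\mathbb{F}[T]y$ meet only in $0$; this case works over every field. Write $Sy=T^jy$ (allowing $T^jy=0$) and $S(x+y)\in\{0\}\cup\{T^m(x+y):m\geq0\}$. Expanding $S(x+y)=Sx+Sy$ gives either $T^kx+T^jy=0$ or $T^kx+T^jy=T^mx+T^my$; in the first case $T^kx=-T^jy\in\mathbb{F}[T]x\cap\mathbb{F}[T]y=\{0\}$, contradicting $T^kx\neq0$, and in the second case $(T^k-T^m)x=(T^m-T^j)y$ forces both sides to vanish because they lie in $\mathbb{F}[T]x$ and $\mathbb{F}[T]y$ respectively. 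Since $T^kx\neq0$ and the nonzero iterates $x,Tx,\dots,T^{\mathrm{ord}(x)-1}x$ are linearly independent, $(T^k-T^m)x=0$ forces $m=k$, whence $(T^k-T^j)y=0$ and $Sy=T^jy=T^ky$.

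For a general $y$, I would pass to $W:=\mathbb{F}[T]x+\mathbb{F}[T]y$. By local nilpotence $W$ is finite dimensional; it is $T$-invariant, and it is $S$-invariant since $Sw\in\mathrm{nullOrb}(T)w\subseteq W$ for every $w\in W$; moreover $S|_W\in\mathrm{nullOrbRef}_0(T|_W)$, $T|_W$ is nilpotent, and $(S|_W)x=(T|_W)^kx\neq0$. So it suffices to treat the case where $X$ is finite dimensional, $T$ nilpotent, and $X$ is generated as an $\mathbb{F}[t]$-module by two elements, hence (structure theorem) a sum of at most two Jordan blocks. Whenever $y$ can be written $y=(y+e)-e$ with $e$ and $y+e$ lying in cyclic subspaces that meet $\mathbb{F}[T]x$ only in $0$, the previous paragraph applied to each piece gives $Sy=T^ky$; choosing such an $e$ (of order equal to $\mathrm{ord}(y)$, with ``socle line'' different from that of $\mathbb{F}[T]x$) is possible unless $X$ is cyclic, or $X$ is a sum of two Jordan blocks of distinct sizes whose long block carries the socle line of $\mathbb{F}[T]x$ while $\mathrm{ord}(y)$ exceeds the short block's length.

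The hard part will be these remaining degenerate cases, which I expect to do by a direct coordinate computation. Fixing a Jordan-string basis $\{T^ix':0\leq i<d\}$ (together with a string for the short block when there are two), every vector $v$ satisfies $Sv=T^{c(v)}v$ for a single exponent $c(v)$, and the key point is that $T^m(\text{test vector})$ has a very rigid expansion in this basis: for a two-term test vector $v+v'$ it is the ``two-spike'' vector $T^mv+T^mv'$. Feeding in test vectors $x+T^ix$, $x+y$, and their scalar perturbations $x+\lambda T^ix$, $x+\lambda y$ when $\mathbb{F}$ is large enough to separate cases, and matching coordinates against the linearly independent Jordan strings, one should pin every exponent $c(v)$ to $k$ and conclude $S=T^k$. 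The genuine difficulty, and where I expect essentially all the work to go, is to make this bookkeeping uniform in the ground field, in particular when $\mathrm{char}\,\mathbb{F}=2$ or $\mathbb{F}$ is small, so that one cannot appeal to ``generic'' scalars and must extract the conclusion from the $\{0,1\}$-coefficient test vectors alone.
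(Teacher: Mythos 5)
Your reduction to the ``Moreover'' clause, the trivially-intersecting-cyclic-subspaces case, and the passage to $W=\mathbb{F}[T]x+\mathbb{F}[T]y$ (hence to a nilpotent transformation with at most two Jordan blocks) are all correct, and this is already a different route from the paper, whose entire proof is an appeal to \cite[Theorem 1]{HIMY}: the statement there (locally nilpotent transformations are algebraically $\mathbb{F}$-orbit reflexive, with the same rigidity assertion) is quoted, not reproved, so none of your Jordan-block analysis appears in the paper. However, your proposal stops exactly where the content lies: the ``remaining degenerate cases'' ($W$ cyclic, or two blocks in special position) are only a plan --- ``feeding in test vectors \dots one should pin every exponent to $k$'' --- and you yourself concede that essentially all the work is there. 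That is a genuine gap, not a deferred routine verification.

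Worse, the step you postpone cannot be completed uniformly in the ground field, because over $\mathbb{F}=\mathbb{Z}/2\mathbb{Z}$ the claim you would need in the cyclic case is false. Take $X=\mathbb{F}^2$ with $Te_2=e_1$, $Te_1=0$ (a single Jordan block), and define $S$ by $Se_1=e_1$, $Se_2=e_1$. Then $Se_1=e_1$, $Se_2=Te_2$, and $S(e_1+e_2)=2e_1=0\in\mathrm{nullOrb}\left(T\right)\left(e_1+e_2\right)$, so $S\in\mathrm{nullOrbRef}_0\left(T\right)$; yet $Se_1=T^{0}e_1\neq0$ while $S\neq T^{0}$, and indeed $S\notin\left\{0,1,T\right\}=\mathrm{nullOrb}\left(T\right)$ (the rank-one projection $Se_1=e_1$, $Se_2=0$ works just as well). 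The membership of $0$ in the null-orbit together with $x+x=0$ is precisely what defeats the two-spike/test-vector bookkeeping you describe: over the two-element field the $\{0,1\}$-coefficient test vectors carry no information. So your strategy can close at best under an added hypothesis on $\mathbb{F}$ (for instance $\left\vert\mathbb{F}\right\vert\geq3$, in the spirit of Theorem \ref{findim}, which excludes prime fields), or by invoking \cite[Theorem 1]{HIMY} as the paper does; note that the same example indicates that the blanket ``over field $\mathbb{F}$'' in the statement itself requires such a caveat.
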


\begin{proof}
We know from \cite[Theorem 1]{HIMY} that $T$ is algebraically $\mathbb{F}%
$-orbit reflexive. Thus if $S\in\mathrm{nullOrbRef}_{0}\left(  T\right)  $ and
$S\neq0,$ then there is an $x\in X$ and an integer $n\geq0$ such that
$Sx=T^{n}x\neq0$, and it follows from \cite[Theorem 1]{HIMY} that $S=T^{n}%
$.\bigskip
\end{proof}

For infinite fields the next theorem reduces the problem of algebraic
null-orbit reflexivity to the case of locally algebraic transformations. A key
ingredient in the proof is an algebraic reflexivity result from \cite{H1} that
says if $\mathbb{F}$ is infinite and $T\in\mathcal{L}\left(  X\right)  $ is
not locally algebraic, then, whenever $S\in\mathcal{L}\left(  X\right)  $ and
for every $x\in X$ there is a polynomial $p_{x}$ such that $Sx=p_{x}\left(
T\right)  x$, we must have $S=p\left(  T\right)  $ for some polynomial $p$.

\begin{theorem}
\label{notlocalg} Suppose $X$ is a vector space over an infinite field
$\mathbb{F}$, and suppose $T\in\mathcal{L}\left(  X\right)  $ is not locally
algebraic. Then $T$ is algebraically null-orbit reflexive.
\end{theorem}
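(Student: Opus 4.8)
The plan is to deduce this quickly from the algebraic reflexivity theorem of \cite{H1} quoted just above, using the observation that the only monic polynomials occurring in the null-orbit are $0$ and the monomials $t^{n}$. Since the inclusion $\mathrm{nullOrb}\left(T\right)\subseteq\mathrm{nullOrbRef}_{0}\left(T\right)$ is immediate, it suffices to prove the reverse inclusion, so I would start with an arbitrary $S\in\mathrm{nullOrbRef}_{0}\left(T\right)$ and unpack the definition: for every $x\in X$ we have $Sx\in\mathrm{nullOrb}\left(T\right)x=\{0,x,Tx,T^{2}x,\ldots\}$, so in particular there is a polynomial $p_{x}\in\mathbb{F}\left[t\right]$ (namely $0$ or some $t^{n}$) with $Sx=p_{x}\left(T\right)x$. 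Because $\mathbb{F}$ is infinite and $T$ is not locally algebraic, the cited result from \cite{H1} applies and produces a single polynomial $p\in\mathbb{F}\left[t\right]$ with $S=p\left(T\right)$. What remains is to check that such a $p$ must in fact be $0$ or a monomial.

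For that step I would use the hypothesis again, but now on a well-chosen vector. Since $T$ is not locally algebraic, fix $x_{0}\in X$ with no local polynomial; equivalently, $\{x_{0},Tx_{0},T^{2}x_{0},\ldots\}$ is linearly independent (if some nonzero polynomial annihilated $x_{0}$, a monic one of minimal degree would be a local polynomial, contradiction). Applying $S=p\left(T\right)$ to $x_{0}$ and using $S\in\mathrm{nullOrbRef}_{0}\left(T\right)$, we get either $p\left(T\right)x_{0}=0$ or $p\left(T\right)x_{0}=T^{k}x_{0}$ for some $k\geq0$. By the linear independence of the powers of $T$ at $x_{0}$, the first case forces $p=0$ and hence $S=0$, while the second forces $p\left(t\right)=t^{k}$ and hence $S=T^{k}$. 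In either case $S\in\mathrm{nullOrb}\left(T\right)$, which completes the argument.

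I do not expect a genuine analytic or combinatorial obstacle here, since the real work is carried by the algebraic reflexivity theorem of \cite{H1}; the only points needing a little care are the translation between ``$T$ is not locally algebraic'' and ``$T$ has a vector with linearly independent orbit,'' and the bookkeeping that the polynomial $p$ supplied by \cite{H1}, once evaluated at that special vector, is pinned down to be $0$ or a single monomial. One should also note in passing that this is exactly the argument pattern used for Theorem \ref{locnil}, except that there the relevant input was $\mathbb{F}$-orbit reflexivity from \cite{HIMY} rather than the not-locally-algebraic reflexivity result from \cite{H1}.
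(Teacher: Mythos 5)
Your proposal is correct and follows essentially the same route as the paper: apply the algebraic reflexivity result of \cite{H1} to get $S=p\left(T\right)$, then evaluate at a vector with no local polynomial to force $p=0$ or $p=t^{k}$. In fact you are slightly more careful than the paper's own write-up, which silently skips the case $Se=0$ (where $p=0$ and $S=0\in\mathrm{nullOrb}\left(T\right)$).
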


\begin{proof}
Suppose $S\in\mathrm{nullOrbRef}_{0}\left(  T\right)  $. Then $Sx\in
\mathrm{nullOrb}\left(  T\right)  x$ for every $x\in X$. It follows from
\cite{H1} that $T$ is algebraically reflexive, so we know there is a
polynomial $p\in\mathbb{F}\left[  t\right]  $ such that $S=p\left(  T\right)
.$ Since $T$ is not locally algebraic, there is a vector $e\in X$ such that
for every nonzero polynomial $q\in\mathbb{F}\left[  t\right]  ,$ we have
$q\left(  T\right)  e\neq0$. Since $S\in\mathrm{nullOrbRef}_{0}\left(
T\right)  ,$ we know that there is an $n\geq0$ such that $Se=T^{n}e.$ Hence
$p\left(  t\right)  =t^{n},$ and thus $S\in\mathrm{nullOrb}\left(  T\right)  $.
\end{proof}

\bigskip

\begin{remark}
If there is an $A\in\mathrm{OrbRef}_{0}\left(  T\right)  $ such that $AT\neq
TA,$ then, since $\mathrm{OrbRef}_{0}\left(  T\right)  \subseteq
\mathrm{nullOrbRef}_{0}\left(  T\right)  $, it follows that $T$ is not
algebraically null-orbit reflexive. Similarly, if $T$ acts on a Banach space,
and there is an $A\in\mathrm{OrbRef}\left(  T\right)  $ such that $AT\neq TA$,
then $T$ is not null-orbit reflexive. Hence the Hilbert space operator
constructed by S. Grivaux and M. Roginskaya \cite{GR} is not null-orbit reflexive.
\end{remark}

\bigskip

The preceding remark naturally leads to a pair of questions.

\bigskip

\textbf{Question 1.} If $S\in\mathrm{nullOrbRef}_{0}\left(  T\right)  $ and
$ST=TS,$ must $S\in\mathrm{nullOrb}\left(  T\right)  $?

\bigskip

\textbf{Question 2.} If $T$ acts on a Hilbert space, $S\in\mathrm{nullOrbRef}%
\left(  T\right)  $ and $ST=TS$, must $S$ be in the strong-operator closure of
$\mathrm{nullOrb}\left(  T\right)  $? What is the answer if we assume that $S$
is in the double commutant of $\left\{  T\right\}  $?

\bigskip

Note that the example of V. M\"{u}ller and J. Vr\v{s}ovsk\'{y} \cite[Example
1]{MV}, where $S=0\in\mathrm{OrbRef}\left(  T\right)  \backslash
\mathrm{Orb}\left(  T\right)  ^{-SOT}$ shows that the analog of Question 2 for
orbit reflexivity has a negative answer. We will see later (Corollary
\ref{ex}) that their example is null-orbit reflexive, so it has no bearing on
Question 2. In \cite{MV} an example is given of an operator on $\ell^{1}$ that
is reflexive but not orbit reflexive. In view of Theorem 2.8 and Proposition
3.1 in \cite{HN}, it seems feasible that the operator $T$ in Example 1 of
\cite[Example 1]{MV} is reflexive. We know that \textrm{AlgLat}$T\subseteq
\left\{  T\right\}  ^{\prime\prime}$ and that if $S\in$\textrm{AlgLat}$T$,
then there is a sequence $\left\{  a_{n}\right\}  _{n\geq0}$ such that, for
every vector $x,$ $Sx \sim %
{\displaystyle\sum_{n=0}^{\infty}}
a_{n}T^{n}$ in the sense of \cite{HN}. 

\bigskip

\textbf{Question 3.} Is the operator in Example 1 of \cite[Example 1]{MV} is reflexive?

\bigskip

The proof of Theorem \ref{notlocalg} shows that if $T$ is algebraically
$\mathbb{F}$-orbit reflexive (reflexive) and $\mathbb{F}$-Orb$\left(
T\right)  $ ($\left\{  p\left(  T\right)  :p\in\mathbb{F}\left[  t\right]
\right\}  $) has a separating vector, then $T$ is algebraically null-orbit
reflexive. This immediately gives us the following (see \cite[Theorem 3]{HIMY}).

\bigskip

\begin{theorem}
\label{findim}Suppose $X$ is a finite-dimensional vector space over a field
$\mathbb{F}$ not isomorphic to $\mathbb{Z}/p\mathbb{Z}$ for some prime $p$.
Then every linear transformation on $X$ whose minimal polynomial splits over
$\mathbb{F}$ is algebraically null-orbit reflexive.
\end{theorem}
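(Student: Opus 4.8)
The plan is to combine the algebraic $\mathbb{F}$-orbit reflexivity established in \cite[Theorem 3]{HIMY} with the reduction recorded just before the statement: if $T$ is algebraically $\mathbb{F}$-orbit reflexive and the algebra $\{p(T):p\in\mathbb{F}[t]\}$ (equivalently, the linear span of $\mathbb{F}\text{-}\mathrm{Orb}(T)$) has a separating vector, then $T$ is algebraically null-orbit reflexive. Under the hypotheses of the theorem, \cite[Theorem 3]{HIMY} supplies the first condition, so all that remains is to produce a separating vector and then rerun the argument from the proof of Theorem \ref{notlocalg}.

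For the separating vector I would invoke the standard structure theory: for any $T$ on a finite-dimensional space $X$ there is a vector $e$ whose local polynomial coincides with the minimal polynomial $m_T$ of $T$ --- one builds $e$ from the primary decomposition of $X$ by summing, over the primary summands, vectors that generate cyclic blocks of maximal size; this requires no hypothesis on $\mathbb{F}$. For such an $e$, $p(T)e=q(T)e$ forces $m_T\mid(p-q)$ and hence $p(T)=q(T)$; thus $e$ separates the whole algebra $\{p(T):p\in\mathbb{F}[t]\}$, and in particular it separates the subset $\mathbb{F}\text{-}\mathrm{Orb}(T)=\{\lambda T^n:\lambda\in\mathbb{F},\ n\ge 0\}$.

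Now the argument proceeds exactly as in Theorem \ref{notlocalg}. Let $S\in\mathrm{nullOrbRef}_0(T)$. For every $x\in X$ we have $Sx\in\mathrm{nullOrb}(T)x\subseteq\mathbb{F}\text{-}\mathrm{Orb}(T)x$, so $S$ belongs to the algebraic $\mathbb{F}$-orbit reflexive closure of $T$, and by \cite[Theorem 3]{HIMY} there are $\lambda\in\mathbb{F}$ and $m\ge 0$ with $S=\lambda T^m$. Evaluating at the separating vector $e$, the relation $Se\in\mathrm{nullOrb}(T)e=\{0\}\cup\{T^ne:n\ge 0\}$ yields either $\lambda T^m e=0$, whence $\lambda T^m=0\in\mathrm{nullOrb}(T)$ by separation, or $\lambda T^m e=T^n e$ for some $n$, whence $(\lambda t^m-t^n)(T)e=0$ and so $\lambda T^m=T^n\in\mathrm{nullOrb}(T)$. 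In both cases $S\in\mathrm{nullOrb}(T)$, which gives $\mathrm{nullOrbRef}_0(T)=\mathrm{nullOrb}(T)$.

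I do not expect a genuine obstacle here: the one place where the hypotheses $\mathbb{F}\not\cong\mathbb{Z}/p\mathbb{Z}$ and ``$m_T$ splits over $\mathbb{F}$'' are actually used is inside \cite[Theorem 3]{HIMY}, which we invoke as a black box, and everything else is routine linear algebra. The only point deserving a moment's care is that the separating vector must separate all of $\mathbb{F}\text{-}\mathrm{Orb}(T)$ rather than just the plain orbit $\{1,T,T^2,\dots\}$; this is handled automatically by choosing a vector that separates the full polynomial algebra, as above.
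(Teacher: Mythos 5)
Your proposal is correct and follows essentially the same route as the paper: the authors also derive this theorem by combining the algebraic $\mathbb{F}$-orbit reflexivity from \cite[Theorem 3]{HIMY} with the observation (recorded just before the statement) that the argument of Theorem \ref{notlocalg} goes through whenever $\mathbb{F}$-$\mathrm{Orb}(T)$ has a separating vector, which in finite dimensions is supplied by a vector whose local polynomial is the minimal polynomial. Your handling of the two cases $Se=0$ and $Se=T^ne$ at the separating vector fills in exactly the details the paper leaves implicit.
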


\bigskip

\begin{corollary}
If $X$ is a finite-dimensional vector space over an algebraically closed field
$\mathbb{F}$, then every linear transformation on $X$ is algebraically
null-orbit reflexive. \bigskip
\end{corollary}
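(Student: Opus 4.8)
The statement is an immediate specialization of Theorem \ref{findim}, so the whole task is to verify that the two hypotheses of that theorem are automatic when $\mathbb{F}$ is algebraically closed. First I would observe that an algebraically closed field cannot be isomorphic to $\mathbb{Z}/p\mathbb{Z}$: no finite field $k$ is algebraically closed, since the polynomial $1+\prod_{a\in k}(t-a)\in k[t]$ has positive degree but no root in $k$; in particular $\mathbb{F}$ is infinite and a fortiori not isomorphic to any $\mathbb{Z}/p\mathbb{Z}$. Second, for any $T\in\mathcal{L}(X)$ with $\dim X<\infty$, the minimal polynomial $m_T$ exists, and because $\mathbb{F}$ is algebraically closed it factors completely into linear factors over $\mathbb{F}$, i.e. it splits. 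Hence every $T\in\mathcal{L}(X)$ satisfies the hypotheses of Theorem \ref{findim}, and the conclusion follows at once.

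I do not expect any genuine obstacle here; all of the substance sits inside Theorem \ref{findim} (and behind it \cite[Theorem 3]{HIMY}). I would only flag one thing worth noting in the write-up: one cannot instead route through Theorem \ref{notlocalg}, because on a finite-dimensional space every transformation satisfies its own minimal polynomial and is therefore locally algebraic, so the non-locally-algebraic hypothesis of Theorem \ref{notlocalg} is never met. The finite-dimensional argument underlying Theorem \ref{findim} is genuinely required.

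If one wished to keep the proof self-contained rather than merely cite Theorem \ref{findim}, I would invoke the remark preceding it: over an algebraically closed field a finite-dimensional $T$ is algebraically $\mathbb{F}$-orbit reflexive, and the algebra $\{p(T):p\in\mathbb{F}[t]\}$ has a separating vector. The separating vector is produced by a primary-decomposition (Chinese-remainder) argument: writing $m_T=\prod_i p_i^{e_i}$ with the $p_i$ distinct irreducibles, pick $x_i$ with local polynomial $p_i^{e_i}$ and set $x=\sum_i x_i$, whose local polynomial is the least common multiple $\prod_i p_i^{e_i}=m_T$. By the remark this forces $T$ to be algebraically null-orbit reflexive. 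Either way the proof is short; the only care needed is the observation that algebraically closed fields automatically dodge the $\mathbb{Z}/p\mathbb{Z}$ exclusion and automatically make every minimal polynomial split.
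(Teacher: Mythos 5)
Your proof is correct and is exactly the intended derivation: the paper states this as an immediate corollary of Theorem \ref{findim}, relying precisely on the two observations you verify (an algebraically closed field is infinite, hence not $\mathbb{Z}/p\mathbb{Z}$, and every minimal polynomial splits over such a field). The additional remarks about Theorem \ref{notlocalg} and the separating vector are accurate but not needed.
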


Recall from ring theory that if $\mathcal{R}$ is a principal ideal domain, $M$
is an $\mathcal{R}$-module, $0\neq r\in\mathcal{R}$ and $rM=\left\{
0\right\}  ,$ then $M$ is a direct sum of cyclic $\mathcal{R}$-modules;
Applying this fact to $\mathcal{R}=\mathbb{F}\left[  t\right]  $, we get that
any algebraic linear transformation on a vector space is a direct sum of
transformations on finite-dimensional subspaces, and therefore has a Jordan
form when the minimal polynomial splits over $\mathbb{F}$. (See \cite{Kap} for
details.) This gives us the following corollary.

\bigskip

\begin{corollary}
\label{algTrans}Suppose $X$ is a vector space over a field $\mathbb{F}$ not
isomorphic to $\mathbb{Z}/p\mathbb{Z}$ for some prime $p$. Then every
algebraic linear transformation on $X$ whose minimal polynomial splits over
$\mathbb{F}$ is algebraically null-orbit reflexive.
\end{corollary}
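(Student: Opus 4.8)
The plan is to reduce to the finite-dimensional Theorem~\ref{findim} using the Jordan decomposition recalled just before the statement, and then to glue the pieces. By that ring-theoretic observation, the algebraicity of $T$ together with the splitting of its minimal polynomial $m=\prod_{\lambda\in\Lambda}(t-\lambda)^{e_{\lambda}}$ (with $\Lambda$ finite) yields a decomposition $X=\bigoplus_{i\in I}X_{i}$ into finite-dimensional $T$-invariant subspaces on each of which $T$ acts as a single Jordan cell; in particular every cell has dimension at most $\deg m$, and for each $\lambda\in\Lambda$ the largest Jordan cell at $\lambda$ has size exactly $e_{\lambda}$.

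First I would note the locality of membership in $\mathrm{nullOrbRef}_{0}(T)$: if $S\in\mathrm{nullOrbRef}_{0}(T)$ and $x\in X_{i}$, then $Sx\in\mathrm{nullOrb}(T)x\subseteq X_{i}$ since $X_{i}$ is $T$-invariant, so $S$ leaves each $X_{i}$, hence each finite sub-sum $W_{F}=\bigoplus_{i\in F}X_{i}$, invariant, and plainly $S|_{W_{F}}\in\mathrm{nullOrbRef}_{0}(T|_{W_{F}})$. Since $W_{F}$ is finite-dimensional, the minimal polynomial of $T|_{W_{F}}$ divides $m$ and hence splits over $\mathbb{F}$; as $\mathbb{F}\not\cong\mathbb{Z}/p\mathbb{Z}$, Theorem~\ref{findim} gives $S|_{W_{F}}\in\mathrm{nullOrb}(T|_{W_{F}})$, i.e.\ $S|_{W_{F}}=0$ or $S|_{W_{F}}=(T|_{W_{F}})^{n}$ for some $n\geq0$. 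If this holds with $0$ for every finite $F$, then $S=0\in\mathrm{nullOrb}(T)$ and we are done.

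Otherwise I would fix a finite set $F_{0}$ large enough that $S$ is nonzero on some cell inside $W_{F_{0}}$ and that $W_{F_{0}}$ contains a maximal Jordan cell for each $\lambda\in\Lambda$; then the minimal polynomial of $T|_{W_{F_{0}}}$ is exactly $m$, and by the previous paragraph $S|_{W_{F_{0}}}=(T|_{W_{F_{0}}})^{k}$ with $(T|_{W_{F_{0}}})^{k}\neq0$ for some $k\geq0$. Now for any finite $F\supseteq F_{0}$, the restriction $S|_{W_{F}}$ is nonzero (it restricts to $(T|_{W_{F_{0}}})^{k}\neq0$), hence $S|_{W_{F}}=(T|_{W_{F}})^{n}$ for some $n$; restricting to $W_{F_{0}}$ gives $(T|_{W_{F_{0}}})^{n}=(T|_{W_{F_{0}}})^{k}$, equivalently $m\mid t^{n}-t^{k}$, and since the minimal polynomial of $T|_{W_{F}}$ divides $m$ this forces $(T|_{W_{F}})^{n}=(T|_{W_{F}})^{k}$, so $S|_{W_{F}}=(T|_{W_{F}})^{k}$. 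Every $x\in X$ lies in some such $W_{F}$, whence $Sx=T^{k}x$, and therefore $S=T^{k}\in\mathrm{nullOrb}(T)$.

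The one delicate point — the only place the argument can fail — is that gluing step: being ``the same power as $T^{k}$'' on a sub-sum is weaker than being literally $T^{k}$, because $(T|_{X_{i}})^{n}=(T|_{X_{i}})^{k}$ can hold with $n\neq k$ (small nilpotent cells, or invertible cells of finite multiplicative order). This is exactly why $F_{0}$ must be chosen so that $T|_{W_{F_{0}}}$ already realizes the full minimal polynomial $m$: then $(T|_{W_{F_{0}}})^{n}=(T|_{W_{F_{0}}})^{k}$ propagates to every larger finite block and hence to all of $X$. (Alternatively one could try the separating-vector reduction following Theorem~\ref{notlocalg} --- the vector $v=\sum_{\lambda\in\Lambda}v_{\lambda}$, with $v_{\lambda}$ generating a maximal cell at $\lambda$, is a separating vector for $\{p(T):p\in\mathbb{F}[t]\}$ --- but that route additionally needs to know that $T$ is algebraically reflexive, whereas the block-by-block argument only uses Theorem~\ref{findim}.)
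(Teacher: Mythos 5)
Your proof is correct, and it carefully supplies the argument that the paper leaves entirely implicit (the paper only states that the Jordan-form decomposition ``gives'' the corollary). Your route --- restrict to finite sub-sums of Jordan cells, invoke Theorem~\ref{findim} on each, and glue --- is the natural reading of the paper's hint, and you correctly isolate and resolve the only delicate point: a power of $T$ can collapse on small or periodic cells, so the exponent $k$ must be pinned down on a finite block $W_{F_0}$ whose minimal polynomial is already all of $m$, after which $m\mid t^{n}-t^{k}$ forces agreement on every larger block. The mechanism the paper itself advertises just before Theorem~\ref{findim} is the other one you mention parenthetically: algebraic ($\mathbb{F}$-orbit) reflexivity plus a separating vector for $\{p(T):p\in\mathbb{F}[t]\}$ (the sum of generators of one maximal cell per eigenvalue). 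That route is shorter on paper but, as you observe, in infinite dimensions it requires knowing that an algebraic $T$ with Jordan form is algebraically reflexive (or $\mathbb{F}$-orbit reflexive), which the paper only cites in the finite-dimensional setting; your block-by-block argument sidesteps this by using only the finite-dimensional Theorem~\ref{findim}, at the cost of the explicit gluing step. Both are sound; yours is the more self-contained.
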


\bigskip

\bigskip\ 

\section{Null-orbit reflexivity}

The following result was proved in \cite[Proposition 3]{HNRR}.

\bigskip

\begin{lemma}
\label{normal}Suppose $\mathcal{N}$ is a commuting family of normal operators
on a Hilbert space $X$ and $A\in B\left(  X\right)  $ satisfies, for every
$x\in X$, $Ax\in\left(  \mathcal{N}x\right)  ^{-}$. Then $A$ is in the
SOT-closure of $\mathcal{N}$.\bigskip
\end{lemma}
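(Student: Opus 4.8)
The plan is to push everything into the abelian von Neumann algebra generated by $\mathcal{N}$, model its elements as ``multiplication operators'' via a projection-valued measure, and thereby reduce the statement to a measure-theoretic fact.

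First I would record the structural setup. Since $\mathcal{N}$ is a commuting family of normal operators, the Fuglede--Putnam theorem makes $\mathcal{N}\cup\mathcal{N}^{*}$ a commuting set, so $\mathcal{R}:=W^{*}(\mathcal{N})$ is abelian. The hypothesis gives $Ax\in(\mathcal{N}x)^{-}\subseteq(\mathcal{R}x)^{-}$ for every $x$, i.e.\ $A\in\mathrm{ref}\,\mathcal{R}$; and every von Neumann algebra is reflexive (if $T$ leaves invariant the range of every projection $P\in\mathcal{R}'$ then $(1-P)TP=0$ for all such $P$, which applied to $P$ and to $1-P$ forces $TP=PT$, so $T$ commutes with $\mathcal{R}'$ and $T\in\mathcal{R}''=\mathcal{R}$). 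Hence $A\in\mathcal{R}$. Now fix a compact Hausdorff space $\Omega$ and a regular projection-valued measure $E$ on $\Omega$ representing $\mathcal{R}$, so that every $R\in\mathcal{R}$ equals $\int_{\Omega}g_{R}\,dE$ for a bounded Borel function $g_{R}$, and for $x\in X$ the finite positive measure $\mu_{x}:=\langle E(\cdot)x,x\rangle$ satisfies $\|(\int g\,dE)x\|^{2}=\int|g|^{2}\,d\mu_{x}$. In particular, writing $N=\int g_{N}\,dE$ and $A=\int g_{A}\,dE$, we get $\|(N-A)x\|^{2}=\int_{\Omega}|g_{N}-g_{A}|^{2}\,d\mu_{x}$, so $N-A$ feels a vector $x$ only through the measure $\mu_{x}$.

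The crux, and the step I expect to be the main obstacle, is a gluing lemma: \emph{given $x_{1},\dots,x_{n}\in X$ there is a single $y\in X$ with $\mu_{y}=\mu_{x_{1}}+\dots+\mu_{x_{n}}$}. Here $y=\sum_{i}x_{i}$ will not work, because of cancellation in the cross terms $\langle E(\cdot)x_{i},x_{j}\rangle$; this is exactly where commutativity of $\mathcal{N}$ (the reducing-subspace structure of $\mathcal{R}$) is essential. I would argue by induction on $n$, the case $n=2$ being the heart: write $x_{2}=x_{2}'+x_{2}''$ with $x_{2}'\in\mathcal{M}:=(\mathcal{R}x_{1})^{-}$ and $x_{2}''\in\mathcal{M}^{\perp}$. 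Since $\mathcal{M}$ and $\mathcal{M}^{\perp}$ both reduce $\mathcal{R}$, the vectors $E(\Delta)x_{1},E(\Delta)x_{2}'\in\mathcal{M}$ are orthogonal to $E(\Delta)x_{2}''\in\mathcal{M}^{\perp}$ for every Borel $\Delta$, so $\mu_{x_{2}}=\mu_{x_{2}'}+\mu_{x_{2}''}$. Also $E(\Delta)x_{1}=0$ forces $E(\Delta)$ to annihilate $\mathcal{M}$, hence $\mu_{x_{2}'}\ll\mu_{x_{1}}$; put $h:=d\mu_{x_{2}'}/d\mu_{x_{1}}\in L^{1}(\mu_{x_{1}})$, so $\sqrt{1+h}\in L^{2}(\mu_{x_{1}})$. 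Letting $\tilde{x}_{1}$ be the limit in $X$, as $k\to\infty$, of the norm-Cauchy sequence $(\int_{\Omega}\min(\sqrt{1+h},k)\,dE)x_{1}$ (a sequence lying in $\mathcal{M}$), the relation $\|(\int g\,dE)x_{1}\|^{2}=\int|g|^{2}d\mu_{x_{1}}$ gives $\mu_{\tilde{x}_{1}}=(1+h)\mu_{x_{1}}$. Then $y:=\tilde{x}_{1}+x_{2}''$ satisfies $\mu_{y}=(1+h)\mu_{x_{1}}+\mu_{x_{2}''}=\mu_{x_{1}}+\mu_{x_{2}'}+\mu_{x_{2}''}=\mu_{x_{1}}+\mu_{x_{2}}$; for the inductive step apply this to the pair $(y_{n-1},x_{n})$ where $\mu_{y_{n-1}}=\mu_{x_{1}}+\dots+\mu_{x_{n-1}}$. (Alternatively one could invoke a direct-integral model $X\cong\int^{\oplus}H_{\omega}\,d\mu$ with $\mathcal{R}=L^{\infty}(\mu)$ acting diagonally, where $y$ is just $\omega\mapsto(\sum_{i}\|x_{i,\omega}\|^{2})^{1/2}u_{\omega}$ for a measurable unit-vector field $u$; the argument above is self-contained and uses no separability hypothesis.)

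Finally I would assemble the proof. Suppose $A$ is not in the SOT-closure of $\mathcal{N}$; then there are $x_{1},\dots,x_{n}\in X$ and $\delta>0$ with $\sum_{i}\|(N-A)x_{i}\|^{2}\ge\delta^{2}$ for every $N\in\mathcal{N}$. Choosing $y$ as in the gluing lemma, for every $N\in\mathcal{N}$ we get
\[
\|(N-A)y\|^{2}=\int_{\Omega}|g_{N}-g_{A}|^{2}\,d\mu_{y}=\sum_{i=1}^{n}\int_{\Omega}|g_{N}-g_{A}|^{2}\,d\mu_{x_{i}}=\sum_{i=1}^{n}\|(N-A)x_{i}\|^{2}\ge\delta^{2},
\]
so $Ay\notin(\mathcal{N}y)^{-}$, contradicting the hypothesis. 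Hence $A$ lies in the SOT-closure of $\mathcal{N}$, as claimed.
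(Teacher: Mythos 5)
Your argument is correct, and it is worth noting that the paper itself offers no proof of this lemma at all: it simply quotes it from \cite[Proposition 3]{HNRR}. So what you have done is supply a complete, self-contained proof where the paper relies on a citation. The structure of your proof is the natural one and is in the spirit of the classical argument for commuting normal families: Fuglede--Putnam makes $\mathcal{R}=W^{*}(\mathcal{N})$ abelian; $Ax\in(\mathcal{R}x)^{-}$ for all $x$ together with reflexivity of von Neumann algebras puts $A$ inside $\mathcal{R}$; and then everything is encoded in the scalar spectral measures $\mu_{x}=\langle E(\cdot)x,x\rangle$, since $\|(N-A)x\|^{2}=\int|g_{N}-g_{A}|^{2}\,d\mu_{x}$. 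The genuinely substantive step is your gluing lemma producing a single $y$ with $\mu_{y}=\mu_{x_{1}}+\cdots+\mu_{x_{n}}$, which converts a basic SOT neighborhood (finitely many vectors) into a condition at one vector, where the hypothesis $Ay\in(\mathcal{N}y)^{-}$ can be applied; your construction of $y$ (splitting $x_{2}$ against the reducing subspace $(\mathcal{R}x_{1})^{-}$, using absolute continuity and the Radon--Nikodym derivative to inflate $x_{1}$ to $\tilde{x}_{1}$ with $\mu_{\tilde{x}_{1}}=(1+h)\mu_{x_{1}}$) is correct in every detail I checked, including the norm-Cauchyness of $(\int\min(\sqrt{1+h},k)\,dE)x_{1}$ and the vanishing of the cross terms. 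A pleasant feature of your route is that it makes no separability assumption on $X$ (your parenthetical direct-integral alternative would need one, but you rightly treat it only as an aside). In short: the proposal is a valid proof, somewhat more detailed and self-contained than the paper, which delegates the lemma entirely to \cite{HNRR}.
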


If in the preceding lemma we let $\mathcal{N}=\left\{  0,1,T,T^{2}%
,\ldots\right\}  $, we obtain the following. \bigskip

\begin{proposition}
\label{nor}Every normal operator on a Hilbert space is null-orbit reflexive.
\end{proposition}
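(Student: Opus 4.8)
The plan is to invoke Lemma~\ref{normal} directly with the commuting family $\mathcal{N}=\mathrm{nullOrb}\left(  T\right)  =\left\{  0,1,T,T^{2},\ldots\right\}  $. The first step is to check that this family satisfies the hypotheses of that lemma, i.e. that it is a commuting family of normal operators. Commutativity is immediate since every member of $\mathcal{N}$ is a polynomial in $T$ (with $0$ and $1$ the constant polynomials). Normality of each $T^{n}$ follows from normality of $T$: using $T^{\ast}T=TT^{\ast}$ one computes $\left(  T^{n}\right)  ^{\ast}T^{n}=\left(  T^{\ast}\right)  ^{n}T^{n}=\left(  T^{\ast}T\right)  ^{n}=\left(  TT^{\ast}\right)  ^{n}=T^{n}\left(  T^{n}\right)  ^{\ast}$, and $0$ and $1$ are trivially normal. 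Hence $\mathcal{N}$ is a commuting family of normal operators.

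Next, suppose $A\in\mathrm{nullOrbRef}\left(  T\right)  $. By definition, for every $x\in X$ we have $Ax\in\left[  \mathrm{nullOrb}\left(  T\right)  x\right]  ^{-}=\left(  \mathcal{N}x\right)  ^{-}$. Lemma~\ref{normal} then yields that $A$ lies in the SOT-closure of $\mathcal{N}=\mathrm{nullOrb}\left(  T\right)  $. This shows $\mathrm{nullOrbRef}\left(  T\right)  $ is contained in the SOT-closure of $\mathrm{nullOrb}\left(  T\right)  $.

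The reverse inclusion holds for an arbitrary operator and requires only the definitions: if $A$ is an SOT-limit of a net $\left(  S_{\alpha}\right)  $ drawn from $\mathrm{nullOrb}\left(  T\right)  $, then for each $x$ the vector $Ax=\lim_{\alpha}S_{\alpha}x$ is a limit of vectors in $\mathrm{nullOrb}\left(  T\right)  x$, hence $Ax\in\left[  \mathrm{nullOrb}\left(  T\right)  x\right]  ^{-}$, so $A\in\mathrm{nullOrbRef}\left(  T\right)  $. Combining the two inclusions, $\mathrm{nullOrbRef}\left(  T\right)  $ equals the SOT-closure of $\mathrm{nullOrb}\left(  T\right)  $, which is precisely the assertion that $T$ is null-orbit reflexive.

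There is essentially no obstacle: the proposition is an immediate corollary of Lemma~\ref{normal} once one observes that $\mathrm{nullOrb}\left(  T\right)  $ is a commuting family of normal operators. The only point needing verification is that observation, and, as indicated above, it is routine.
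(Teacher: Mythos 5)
Your proposal is correct and follows exactly the paper's route: the paper also obtains Proposition~\ref{nor} by applying Lemma~\ref{normal} to the commuting family $\mathcal{N}=\mathrm{nullOrb}\left(T\right)=\left\{0,1,T,T^{2},\ldots\right\}$. Your extra verifications (normality of the powers $T^{n}$ and the trivial reverse inclusion) are routine details the paper leaves implicit.
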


\bigskip

The next two results are consequences of Theorem \ref{locnil}.

\begin{theorem}
\label{unionker}Suppose $T$ is a bounded linear operator on a real or complex
normed space $X$ such that $\cup_{n=1}^{\infty}\ker\left(  T^{n}\right)  $ is
dense in $X$. Then $T$ is null-orbit reflexive and $\mathrm{nullOrb}\left(
T\right)  $ is SOT-closed. Moreover, if $S\in\mathrm{nullOrbRef}\left(
T\right)  $, $x\in\cup_{n=1}^{\infty}\ker\left(  T^{n}\right)  ,$ $k\geq0,$
and $Sx=T^{k}x\neq0$, then $S=T^{k}$.
\end{theorem}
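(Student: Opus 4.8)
The plan is to reduce to the locally nilpotent situation handled by Theorem \ref{locnil} by restricting $T$ to the dense invariant subspace $X_0 := \bigcup_{n=1}^{\infty}\ker(T^n)$, and then to use continuity of the operators involved to transfer conclusions from $X_0$ back to all of $X$.

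First I would record the structural facts: $X_0$ is a linear subspace (an increasing union of subspaces), it is dense by hypothesis, and it is $T$-invariant, since $T^n x = 0$ implies $T^{n}(Tx) = T^{n+1}x = 0$; hence $T_0 := T|_{X_0}$ is a locally nilpotent transformation of $X_0$. The crucial step is to show that every $S \in \mathrm{nullOrbRef}(T)$ maps $X_0$ into $X_0$ and that $S|_{X_0} \in \mathrm{nullOrbRef}_{0}(T_0)$. Here one uses that for $x \in X_0$, say $x \in \ker(T^n)$, the set $\mathrm{nullOrb}(T)x = \{0, x, Tx, \dots, T^{n-1}x\}$ is finite, hence closed in the normed space; therefore $Sx \in [\mathrm{nullOrb}(T)x]^{-} = \mathrm{nullOrb}(T)x \subseteq X_0$, which is exactly the algebraic membership asserted.

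Granting this, Theorem \ref{locnil} applied to $T_0$ shows $S|_{X_0} \in \mathrm{nullOrb}(T_0)$, so either $S|_{X_0} = 0$ or $S|_{X_0} = T_0^{k} = T^{k}|_{X_0}$ for some $k \ge 0$. In either case $S$ and the corresponding member of $\mathrm{nullOrb}(T)$ (namely $0$ or $T^{k}$) are continuous operators agreeing on the dense set $X_0$, hence equal on $X$; thus $S \in \mathrm{nullOrb}(T)$. This proves $\mathrm{nullOrbRef}(T) \subseteq \mathrm{nullOrb}(T)$. Since one always has $\mathrm{nullOrb}(T) \subseteq \overline{\mathrm{nullOrb}(T)}^{\,\mathrm{SOT}} \subseteq \mathrm{nullOrbRef}(T)$ (a net from $\mathrm{nullOrb}(T)$ converging in SOT produces, at each $x$, a point of $[\mathrm{nullOrb}(T)x]^{-}$), all three sets coincide, giving simultaneously that $\mathrm{nullOrb}(T)$ is SOT-closed and that $T$ is null-orbit reflexive. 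For the last assertion, if $S \in \mathrm{nullOrbRef}(T)$, $x \in X_0$, $k \ge 0$ and $Sx = T^{k}x \ne 0$, then the second assertion of Theorem \ref{locnil} applied to $T_0$ forces $S|_{X_0} = T_0^{k} = T^{k}|_{X_0}$, and density together with continuity again upgrades this to $S = T^{k}$.

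I expect the argument to be essentially bookkeeping once the reduction is set up; the only step demanding genuine care, and the only place the topology is used, is the claim that the topological defining condition for $\mathrm{nullOrbRef}(T)$ collapses, on $X_0$, to the purely algebraic condition defining $\mathrm{nullOrbRef}_{0}(T_0)$. This collapse holds precisely because the orbit of each vector of $X_0$ is a finite (hence closed) set, so no new limit points can arise, and because $X_0$ is invariant under both $T$ and every $S \in \mathrm{nullOrbRef}(T)$.
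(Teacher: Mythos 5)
Your proof is correct and follows essentially the same route as the paper's: restrict to the dense $T$-invariant subspace $\bigcup_{n\ge 1}\ker(T^n)$, observe that each null-orbit of a vector there is finite and hence closed so the topological condition collapses to the algebraic one, apply Theorem \ref{locnil} to the locally nilpotent restriction, and extend by density and continuity. Your treatment of the SOT-closedness and of the final ``moreover'' clause via the second assertion of Theorem \ref{locnil} is also the intended argument.
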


\begin{proof}
Suppose $S\in\mathrm{nullOrbRef}\left(  T\right)  ,$ and let $M=\cup
_{n=1}^{\infty}\ker\left(  T^{n}\right)  $. It is clear that $S\left(
M\right)  \subseteq M$ and $T\left(  M\right)  \subseteq M$ and $S|_{M}%
\in\mathrm{nullOrbRef}_{0}\left(  T|M\right)  $. But $T|M$ is locally
nilpotent, and if $x\in M$ and $T^{n}x=0$, then
\[
\mathrm{nullOrb}\left(  T\right)  x=\left\{  0\right\}  \cup\left\{
x,Tx,\ldots,T^{n-1}x\right\}
\]
is norm closed. Hence, $\mathrm{nullOrbRef}\left(  T|M\right)
=\mathrm{nullOrbRef}_{0}\left(  T|M\right)  ,$ which, by Theorem \ref{locnil}
is $\mathrm{nullOrb}\left(  T|M\right)  $. Hence there is an $A\in
\mathrm{nullOrb}\left(  T\right)  $ such that $S|M=A|M$. However, $M$ is dense
in $X,$ so $S=A\in\mathrm{nullOrb}\left(  T\right)  $.
\end{proof}

\bigskip

The preceding theorem implies a stronger version of itself.

\begin{corollary}
Suppose $X$ is a real or complex normed space, and there is a decreasingly
directed family $\left\{  X_{\lambda}:\lambda\in\Lambda\right\}  $ of
$T$-invariant closed linear subspaces such that
\end{corollary}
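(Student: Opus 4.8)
The plan is to reduce everything to Theorem \ref{unionker} applied on each quotient $X/X_\lambda$, and then to reassemble the pieces using directedness of the family. Write $q_\lambda\colon X\to X/X_\lambda$ for the quotient map and $\bar T_\lambda$ for the operator induced by $T$ on $X/X_\lambda$; the hypotheses give (immediately, in whatever exact form they are stated) that $\bigcap_\lambda X_\lambda=\{0\}$ and that $\bigcup_{n}\ker(\bar T_\lambda^{\,n})$ is dense in $X/X_\lambda$ for every $\lambda$.

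First I would check that an arbitrary $S\in\mathrm{nullOrbRef}(T)$ leaves each $X_\lambda$ invariant: for $x\in X_\lambda$ the set $\mathrm{nullOrb}(T)x$ lies in the closed $T$-invariant subspace $X_\lambda$, hence so does $Sx\in[\mathrm{nullOrb}(T)x]^{-}$. Thus $S$ also induces an operator $\bar S_\lambda$ on $X/X_\lambda$, and since $q_\lambda$ is continuous and carries $\mathrm{nullOrb}(T)x$ onto $\mathrm{nullOrb}(\bar T_\lambda)q_\lambda x$, we get $\bar S_\lambda\in\mathrm{nullOrbRef}(\bar T_\lambda)$. Theorem \ref{unionker} then applies to $\bar T_\lambda$: it is null-orbit reflexive with $\mathrm{nullOrb}(\bar T_\lambda)$ SOT-closed, so $\bar S_\lambda\in\{0,1,\bar T_\lambda,\bar T_\lambda^{\,2},\dots\}$.

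Now the reassembly. If $S=0$ we are done since $0\in\mathrm{nullOrb}(T)$, so assume $S\ne 0$; as $\bigcap_\lambda X_\lambda=\{0\}$ there is $\lambda_0$ with $\bar S_{\lambda_0}\ne 0$, say $\bar S_{\lambda_0}=\bar T_{\lambda_0}^{\,k}$, and the goal is $S=T^{k}$, that is, $\bar S_\mu=\bar T_\mu^{\,k}$ for every $\mu$. Given $\mu$, pick $\nu$ with $X_\nu\subseteq X_{\lambda_0}\cap X_\mu$; then $\bar S_\nu\ne 0$ (otherwise $S(X)\subseteq X_\nu\subseteq X_{\lambda_0}$ forces $\bar S_{\lambda_0}=0$), so $\bar S_\nu=\bar T_\nu^{\,k'}$ for some $k'$. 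Pushing this through the canonical surjection $X/X_\nu\to X/X_{\lambda_0}$, which intertwines the induced operators, gives $\bar T_{\lambda_0}^{\,k'}=\bar S_{\lambda_0}=\bar T_{\lambda_0}^{\,k}$. If $k\ne k'$, say $k<k'$, then $\bar T_{\lambda_0}^{\,k}\bigl(1-\bar T_{\lambda_0}^{\,k'-k}\bigr)=0$; since $1-\bar T_{\lambda_0}^{\,k'-k}$ is invertible on every $\ker(\bar T_{\lambda_0}^{\,n})$, the operator $\bar T_{\lambda_0}^{\,k}$ vanishes on the dense set $\bigcup_n\ker(\bar T_{\lambda_0}^{\,n})$, hence $\bar T_{\lambda_0}^{\,k}=0$, contradicting $\bar T_{\lambda_0}^{\,k}=\bar S_{\lambda_0}\ne 0$. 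So $k'=k$, whence $\bar S_\nu=\bar T_\nu^{\,k}$ and, pushing down once more, $\bar S_\mu=\bar T_\mu^{\,k}$. As $\mu$ was arbitrary, $(S-T^{k})(X)\subseteq\bigcap_\mu X_\mu=\{0\}$, so $S=T^{k}\in\mathrm{nullOrb}(T)$. Combining $\mathrm{nullOrbRef}(T)\subseteq\mathrm{nullOrb}(T)$ with the trivial inclusions $\mathrm{nullOrb}(T)\subseteq\overline{\mathrm{nullOrb}(T)}^{\,\mathrm{SOT}}\subseteq\mathrm{nullOrbRef}(T)$ shows the three sets coincide, so $T$ is null-orbit reflexive and $\mathrm{nullOrb}(T)$ is SOT-closed.

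The step I expect to be the real obstacle is the consistency argument in the third paragraph — preventing two different indices from witnessing $S$ as two different powers of $T$. The rest is routine bookkeeping with quotients and directed families; it is exactly the density of $\bigcup_n\ker(\bar T_\lambda^{\,n})$ in each $X/X_\lambda$ — an avatar of local nilpotence — that makes the powers of $\bar T_\lambda$ rigid enough for the matching to succeed.
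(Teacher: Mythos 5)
Your proof is correct and follows essentially the same route as the paper: pass to the quotients $X/X_\lambda$, apply Theorem \ref{unionker} there, and use the decreasing directedness to force all the exponents to agree. The only real difference is in the exponent-matching step, where the paper picks a vector $x$ with $T^n x\in X_\sigma$ but $x\notin X_\lambda$ (after first discarding those $X_\lambda$ containing $e$ or $Se$), while you argue that $1-\bar T_{\lambda_0}^{\,k'-k}$ is invertible on each $\ker(\bar T_{\lambda_0}^{\,n})$ so that $\bar T_{\lambda_0}^{\,k}$ would vanish on a dense set --- a clean and equally valid way to get the same consistency.
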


\begin{enumerate}
\item for every $\lambda\in\Lambda$, $\cup_{n=0}^{\infty}\left(  T^{n}\right)
^{-1}\left(  X_{\lambda}\right)  $ is dense in $X$, and

\item $\cap_{\lambda\in\Lambda}X_{\lambda}=\left\{  0\right\}  .$
\end{enumerate}

Then $T$ is null-orbit reflexive and $\mathrm{nullOrbRef}\left(  T\right)
=\mathrm{nullOrb}\left(  T\right)  $.

\begin{proof}
Suppose $S\in\mathrm{nullOrbRef}\left(  T\right)  $ and $S\neq0$. Choose $e\in
X$ such that $Se\neq0$. It follows from $\left(  2\right)  $ that both
$\left(  1\right)  $ and $\left(  2\right)  $ remain true if we consider only
those $X_{\lambda}$ that contain neither $e$ nor $Se$. Since $T\left(
X_{\lambda}\right)  \subseteq X_{\lambda}$, $\hat{T}_{\lambda}\left(
x+X_{\lambda}\right)  =Tx+X_{\lambda}$ defines a bounded linear operator
$\hat{T}_{\lambda}$ on $X/X_{\lambda}$. Condition $\left(  1\right)  $ implies
that $\cup_{n=1}^{\infty}\ker\left(  \hat{T}_{\lambda}^{n}\right)  $ is dense
in $X/X_{\lambda};$ whence, by Theorem \ref{unionker}, $\hat{T}_{\lambda}$ is
null-orbit reflexive. However, $S\in\mathrm{nullOrbRef}\left(  T\right)  $
implies that $S\left(  X_{\lambda}\right)  \subseteq X_{\lambda},$ so $\hat
{S}_{\lambda}\left(  x+X_{\lambda}\right)  =Sx+X_{\lambda}$ defines an
operator on $X/X_{\lambda}$ such that $\hat{S}_{\lambda}\in
\mathbb{\mathrm{nullOrbRef}}\left(  \hat{T}_{\lambda}\right)  .$ Hence, by
Theorem \ref{unionker}, there is a unique nonnegative integer $n_{\lambda}$
such that $\hat{S}_{\lambda}=\hat{T}_{\lambda}^{n_{\lambda}}.$ Suppose
$\eta\in\Lambda$. Since the $X_{\lambda}$'s are decreasingly directed, there
is a $\sigma\in\Lambda$ such that $X_{\sigma}\subseteq X_{\lambda}\cap
X_{\eta}$. Applying the same arguments we used on $X_{\lambda}$, there is a
unique integer $m\geq0$ such that $\hat{S}_{\sigma}=T_{\sigma}^{n_{\sigma}}$.
However, it follows from $\left(  1\right)  $that there is a vector
$x\in\left[  \cup_{n=0}^{\infty}\left(  T^{n}\right)  ^{-1}\left(  X_{\sigma
}\right)  \right]  \backslash X_{\lambda}$. Then there is an $n$ such that
$T^{n}x\in X_{\sigma}\subseteq X_{\lambda}$ and thus $\hat{T}^{n}\left(
x+X_{\lambda}\right)  =0$ but $x+X_{\lambda}\neq0.$ However, $Sx-T^{n_{\sigma
}}x\in X_{\sigma}\subseteq X_{\lambda},$ so
\[
\hat{S}_{\lambda}\left(  x+X_{\lambda}\right)  =\bar{T}_{\lambda}^{n_{\sigma}%
}\left(  x+X_{\lambda}\right)  =\bar{T}_{\lambda}^{n_{\lambda}}\left(
x+X_{\lambda}\right)  ,
\]
which implies that $n_{\sigma}=n_{\lambda}$. Similarly, $n_{\sigma}%
=n_{\lambda}.$ Hence there is an integer $n\geq0$ such that, for every
$\lambda\in\Lambda$, $n_{\lambda}=n$. Hence, for every $x\in X$ and every
$\lambda\in\Lambda,$%
\[
Sx-T^{n}x\in X_{\lambda,}%
\]
which, by $\left(  2\right)  $, implies $S=T^{n}$.
\end{proof}

\bigskip

The following corollary applies to operators that have a strictly
upper-triangular operator matrix with respect to some direct sum decomposition.

\begin{corollary}
If a normed space $X$ over $\mathbb{F}\in\left\{  \mathbb{R},\mathbb{C}%
\right\}  $ is a direct sum of spaces $\left\{  X_{n}:n\in\mathbb{N}\right\}
$ such that $T\left(  X_{1}\right)  =\left\{  0\right\}  ,$ and for every
$n>1$,
\[
T\left(  X_{n}\right)  \subseteq\left(
{\displaystyle\sum\nolimits_{k<n}^{\oplus}}
X_{k}\right)  ^{-},
\]
then $T$ is null-orbit reflexive and $\mathrm{nullOrbRef}\left(  T\right)
=\mathrm{nullOrb}\left(  T\right)  $.
\end{corollary}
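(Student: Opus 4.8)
The plan is to deduce this directly from Theorem~\ref{unionker} by exhibiting a dense subspace of $X$ on which $T$ behaves like a locally nilpotent transformation. For $m\geq 1$ set $Y_{m}=(X_{1}+X_{2}+\cdots +X_{m})^{-}$ and put $Y_{0}=\{0\}$. Since $X$ is the direct sum of the $X_{n}$, the algebraic union $\bigcup_{m\geq 1}(X_{1}+\cdots +X_{m})$ is dense in $X$, and therefore so is $\bigcup_{m\geq 1}Y_{m}$.

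The key step is the inductive claim that each $Y_{m}$ is $T$-invariant, and more precisely that $T(Y_{m})\subseteq Y_{m-1}$. On the algebraic sum this is immediate from the hypotheses: $T(X_{1})=\{0\}\subseteq Y_{m-1}$, while for $1<k\leq m$ the assumption $T(X_{k})\subseteq(\sum_{j<k}X_{j})^{-}=Y_{k-1}\subseteq Y_{m-1}$ applies; hence $T(X_{1}+\cdots +X_{m})\subseteq Y_{m-1}$, and since $T$ is bounded and $Y_{m-1}$ is closed, taking closures yields $T(Y_{m})\subseteq Y_{m-1}$. Iterating gives $T^{m}(Y_{m})\subseteq T^{m-1}(Y_{m-1})\subseteq\cdots\subseteq T(Y_{1})\subseteq Y_{0}=\{0\}$, so $Y_{m}\subseteq\ker(T^{m})$.

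Putting these together, $\bigcup_{n\geq 1}\ker(T^{n})\supseteq\bigcup_{m\geq 1}Y_{m}$ is dense in $X$, so Theorem~\ref{unionker} applies and shows at once that $T$ is null-orbit reflexive and that $\mathrm{nullOrb}(T)$ is SOT-closed; the latter gives $\mathrm{nullOrbRef}(T)=\mathrm{nullOrb}(T)$. I do not expect a real obstacle here: the only points that need a moment's care are the continuity argument that propagates $T(X_{1}+\cdots +X_{m})\subseteq Y_{m-1}$ to the closure $Y_{m}$, and the (routine) observation that the hypothesis that $X$ is the direct sum of the $X_{n}$ is precisely what makes $\bigcup_{m}Y_{m}$ dense. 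One could instead route the argument through the decreasingly directed family of the preceding Corollary, but invoking Theorem~\ref{unionker} directly on the locally nilpotent behaviour along $\bigcup_{m}Y_{m}$ is the shortest path.
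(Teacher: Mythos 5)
Your argument is correct and is essentially the intended derivation: the strictly upper-triangular structure forces $(X_{1}+\cdots+X_{m})^{-}\subseteq\ker(T^{m})$, so $\bigcup_{n}\ker(T^{n})$ is dense and Theorem~\ref{unionker} applies verbatim. The paper leaves this corollary unproved precisely because it follows this immediately; your only added content (the continuity step passing $T(X_{1}+\cdots+X_{m})\subseteq Y_{m-1}$ to the closure) is handled correctly.
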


\bigskip

The preceding corollary has some familiar special cases.\bigskip

\begin{corollary}
If $T$ is an operator-weighted (unilateral, bilateral, or backwards) shift or
if $T$ is a direct sum of nilpotent operators on a real or complex normed
space $X$, then $T$ is null-orbit reflexive.
\end{corollary}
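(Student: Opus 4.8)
The plan is to reduce each of the listed cases to one of the three preceding results, after writing $X$ as the closure of an algebraic direct sum $\bigoplus_\alpha X_\alpha$ of the relevant coordinate or summand subspaces, so that finitely supported vectors are dense in $X$. If $T=\bigoplus_\alpha N_\alpha$ is a direct sum of nilpotent operators $N_\alpha$ on the $X_\alpha$, then a finitely supported vector $x=\sum_{\alpha\in F}x_\alpha$ is annihilated by $T^N$ as soon as $N$ exceeds the nilpotency index of each $N_\alpha$ with $\alpha\in F$; hence $\bigcup_{n\ge1}\ker(T^n)$ contains the dense algebraic direct sum and is itself dense in $X$, so Theorem~\ref{unionker} applies and $T$ is null-orbit reflexive.

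Now suppose $T$ is an operator-weighted shift. Index the coordinate subspaces by $\mathbb{N}$ in the unilateral case and by $\mathbb{Z}$ in the bilateral case, so that a forward shift satisfies $T(X_n)\subseteq X_{n+1}$ and a backward shift satisfies $T(X_n)\subseteq X_{n-1}$, with $T(X_1)=\{0\}$ in the unilateral backward case. The unilateral backward shift is handled directly by the preceding corollary on operators with a strictly triangular block matrix, since $T(X_1)=\{0\}$ and $T(X_n)\subseteq X_{n-1}\subseteq\big(\sum_{k<n}^{\oplus}X_k\big)^-$ for $n>1$. For the forward unilateral shift and the bilateral shift (in either orientation) I would instead use the Corollary on decreasingly directed families, taking $X_\lambda=\big(\sum_{n\ge\lambda}^{\oplus}X_n\big)^-$ when $T$ raises the index and $X_\lambda=\big(\sum_{n\le\lambda}^{\oplus}X_n\big)^-$ when it lowers it. These subspaces are closed and $T$-invariant because the shift moves the cut-off by one in the direction that keeps us inside $X_\lambda$; they are decreasingly directed; and $\bigcap_\lambda X_\lambda=\{0\}$ since a vector lying in all of them has every coordinate projection equal to $0$. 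Finally, because $T^n$ translates the index by $n$, the preimage $(T^n)^{-1}(X_\lambda)$ contains an entire tail (respectively head) of coordinate subspaces, so $\bigcup_{n\ge0}(T^n)^{-1}(X_\lambda)$ contains the algebraic direct sum and is dense in $X$; this is hypothesis (1), while hypothesis (2) is the triviality of $\bigcap_\lambda X_\lambda$ just noted. The Corollary then yields $\mathrm{nullOrbRef}(T)=\mathrm{nullOrb}(T)$, and in particular $T$ is null-orbit reflexive.

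I do not expect a genuine obstacle. The one point that needs care is that forward and bilateral shifts do not carry an $\mathbb{N}$-indexed strictly triangular decomposition, so one cannot simply quote the immediately preceding corollary but must pass to the directed-family version and verify its two hypotheses; that verification is exactly the routine bookkeeping with tails, heads, and preimages described above.
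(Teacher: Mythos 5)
Your proof is correct, but it does not follow the paper's (implicit) route. The paper offers no written proof: the corollary is introduced with the sentence ``The preceding corollary has some familiar special cases,'' i.e.\ the authors regard all four cases as instances of the strictly block-upper-triangular corollary ($T(X_1)=\{0\}$ and $T(X_n)\subseteq(\sum_{k<n}^{\oplus}X_k)^-$). That reading is literally accurate only for the backwards unilateral shift (and, after choosing complements of the kernels, for nilpotent sums with countably many summands); a forward unilateral or bilateral weighted shift with injective weights has $\ker(T^n)=\{0\}$ for all $n$, so no re-indexing produces a first summand annihilated by $T$, and the triangular corollary cannot be quoted directly. You noticed exactly this and instead went one level up, to the corollary on decreasingly directed families of $T$-invariant closed subspaces, taking the tails $X_\lambda=(\sum_{n\ge\lambda}^{\oplus}X_n)^-$ (or heads, when $T$ lowers the index); your verification of hypotheses (1) and (2) via finitely supported vectors is routine and correct. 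Likewise your treatment of direct sums of nilpotents via Theorem~\ref{unionker} is cleaner than forcing them into the triangular template (and works for uncountably many summands). So your argument is not just a valid alternative: it patches a genuine looseness in the paper's one-line justification. A small remark: the forward unilateral case could also be dispatched by the theorem asserting null-orbit reflexivity when $\cap_{n}\overline{T^n(X)}=\{0\}$, but that result appears after this corollary in the paper, so your choice respects the logical order.
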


\bigskip

\bigskip

\begin{theorem}
Suppose $X$ is a normed space over $\mathbb{F}\in\left\{  \mathbb{R}%
,\mathbb{C}\right\}  $, $T\in B\left(  X\right)  $ and $\cap_{n=1}^{\infty
}T^{n}\left(  X\right)  ^{-}=\left\{  0\right\}  $. Then $T$ is null-orbit
reflexive and $\mathrm{nullOrbRef}\left(  T\right)  =\mathrm{nullOrb}\left(
T\right)  $. Moreover, if $S\in\mathrm{nullOrbRef}\left(  T\right)  ,$ $x\in
X,$ and $0\neq Sx=T^{k}x$, then $S=T^{k}$.
\end{theorem}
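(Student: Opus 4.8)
The plan is to deduce this from the corollary to Theorem~\ref{unionker} about a decreasingly directed family of $T$-invariant closed subspaces, applied to the filtration of $X$ by closures of iterated ranges. I would set $X_{n}=\overline{T^{n}(X)}$ for $n\geq 1$. Continuity of $T$ gives $T\bigl(\overline{T^{n}(X)}\bigr)\subseteq\overline{T^{n+1}(X)}\subseteq\overline{T^{n}(X)}$, so each $X_{n}$ is a closed $T$-invariant subspace and $X_{1}\supseteq X_{2}\supseteq\cdots$; a nested sequence is in particular a decreasingly directed family. Condition~(2) of that corollary, $\bigcap_{n}X_{n}=\{0\}$, is exactly the hypothesis here. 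Condition~(1) comes for free: for a fixed $n$ and every $x\in X$ we have $T^{n}x\in T^{n}(X)\subseteq X_{n}$, so $(T^{n})^{-1}(X_{n})=X$ and hence $\bigcup_{m\geq 0}(T^{m})^{-1}(X_{n})=X$ is dense in $X$. The corollary then yields directly that $T$ is null-orbit reflexive and $\mathrm{nullOrbRef}(T)=\mathrm{nullOrb}(T)$.

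For the final sentence, suppose $S\in\mathrm{nullOrbRef}(T)$ and $0\neq Sx=T^{k}x$ for some $x\in X$ and $k\geq 0$. By the first part $S=T^{n}$ for some $n\geq 0$ (the option $S=0$ is ruled out since $Sx\neq 0$), so $T^{n}x=T^{k}x\neq 0$. To see $n=k$, assume not, say $n\geq k$ without loss of generality, and set $w:=T^{k}x$ and $d:=n-k\geq 1$; then $w\neq 0$, and $T^{n}x=T^{k}x$ rearranges to $T^{d}w=w$. Iterating, $w=(T^{d})^{j}w\in T^{jd}(X)\subseteq X_{jd}$ for every $j\geq 1$, and since $\bigcap_{m}X_{m}=\{0\}$ this forces $w=0$, a contradiction. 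Hence $n=k$ and $S=T^{k}$.

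Since the real content lives in the already-proved corollary, I do not anticipate a genuine obstacle. The only step requiring a bit of care is the elementary argument finishing the ``moreover'' clause: one must notice that $T^{n}x=T^{k}x\neq 0$ with $n\neq k$ produces a nonzero vector fixed by a positive power of $T$, which then survives in every $\overline{T^{m}(X)}$ --- precisely what the standing hypothesis excludes. It is also worth verifying carefully that a nested sequence of range-closures meets the ``decreasingly directed'' requirement and that $(T^{n})^{-1}(X_{n})=X$ really makes condition~(1) automatic, as these are the only non-formal inputs to the reduction.
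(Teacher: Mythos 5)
Your proof is correct, but it takes a genuinely different route from the paper's. The paper argues directly from the algebraic results: it first shows that the hypothesis $\cap_{n}\overline{T^{n}(X)}=\{0\}$ forces $T$ to be either locally nilpotent or not locally algebraic (a finite-dimensional invariant subspace on which $T$ is not nilpotent would contain a nonzero subspace $N$ with $T(N)=N$, which would then sit inside every $\overline{T^{n}(X)}$), so Theorem~\ref{locnil} or Theorem~\ref{notlocalg} gives \emph{algebraic} null-orbit reflexivity; it then observes that each set $\mathrm{nullOrb}(T)x$ is already norm closed, because any limit of $T^{k_{j}}x$ with $k_{j}\to\infty$ lies in $\cap_{N}\overline{T^{N}(X)}=\{0\}$ and $0$ belongs to the null-orbit --- whence $\mathrm{nullOrbRef}(T)=\mathrm{nullOrbRef}_{0}(T)=\mathrm{nullOrb}(T)$. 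The ``moreover'' clause is settled there by the same finite-dimensional-invariant-subspace observation applied to $\mathrm{sp}\{x,Tx,\ldots,T^{n-1}x\}$. You instead reduce everything to the corollary on decreasingly directed families via $X_{n}=\overline{T^{n}(X)}$, and your verifications (nested implies directed; $(T^{n})^{-1}(X_{n})=X$ makes condition~(1) automatic; condition~(2) is the hypothesis) are all sound, with no circularity since that corollary precedes this theorem and rests only on Theorem~\ref{unionker}. Your fixed-point argument for the final clause ($T^{d}w=w$ with $w\neq 0$ would put $w$ in every $\overline{T^{m}(X)}$) is a clean, arguably more elementary substitute for the paper's. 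What the paper's route buys that yours does not is the intermediate information that $T$ is algebraically null-orbit reflexive and that the null-orbits are norm closed; what yours buys is economy, since the real work is delegated to an already-proved statement.
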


\begin{proof}
We will first show that $T$ is algebraically null-orbit reflexive. If $M$ is a
finite-dimensional invariant subspace for $T$ and $T|M$ is not nilpotent, then
there is a nonzero $T$-invariant subspace $N$ of $M$ such that $\ker\left(
T|N\right)  =0.$ Thus $T\left(  N\right)  =N\neq0,$ which violates $\cap
_{n=1}^{\infty}T^{n}\left(  X\right)  ^{-}=\left\{  0\right\}  $. Thus, either
$T$ is not locally algebraic or $T$ is locally nilpotent. In these cases it
follows either from Theorem \ref{notlocalg} or Theorem \ref{locnil} that $T$
is indeed algebraically null-orbit reflexive. Furthermore, the hypothesis on
$T$ implies, for each $x\in X,$ that
\[
\cap_{N=1}^{\infty}\left\{  T^{k}x:k\geq N\right\}  ^{-}=\left\{  0\right\}
\text{,}%
\]
so $\mathbb{\ }\mathrm{nullOrb}\left(  T\right)  x$ is closed in $X.$ Thus
$\mathrm{nullOrbRef}\left(  T\right)  =\mathrm{nullOrbRef}_{0}\left(
T\right)  =\mathrm{nullOrb}\left(  T\right)  $. For the last statement suppose
$x\in X,$ and $k,n\geq0$ are integers, and
\[
0\neq Sx=T^{n}x=T^{k}x.
\]
Suppose $k<n$. Then $M=sp\left\{  x,Tx,\ldots,T^{n-1}x\right\}  $ is a nonzero
finite-dimensional invariant subspace for $T$ with $\dim M\leq n$. Since
$T^{n}x\neq0$, we know $T|M$ is not nilpotent, which, as remarked earlier,
contradicts $\cap_{n=1}^{\infty}T^{n}\left(  X\right)  ^{-}=\left\{
0\right\}  $.
\end{proof}

\bigskip

This theorem also implies a stronger version of itself.\bigskip

\begin{corollary}
Suppose $X$ is a real or complex normed space, $T\in B\left(  X\right)  ,$ and
there is an increasingly directed family $\left\{  X_{\lambda}:\lambda
\in\Lambda\right\}  $ of $T$-invariant linear subspaces such that
\end{corollary}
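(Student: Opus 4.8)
The plan is to reduce everything to the preceding theorem applied to the restrictions $T|_{X_\lambda}$. First I would dispose of a cosmetic point: if the subspaces $X_\lambda$ are not assumed closed, replace each by its closure $\overline{X_\lambda}$. Since $T$ is bounded, $T(\overline{X_\lambda})\subseteq\overline{T(X_\lambda)}\subseteq\overline{X_\lambda}$, so the $\overline{X_\lambda}$ are still $T$-invariant; and $T^{n}(\overline{X_\lambda})^{-}\subseteq T^{n}(X_\lambda)^{-}$, so the hypothesis that each piece satisfies $\cap_{n}T^{n}(X_\lambda)^{-}=\{0\}$ survives; the union is still dense and the family is still increasingly directed. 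I would also record the standing fact (already implicit for Theorem \ref{unionker}) that $\mathrm{nullOrbRef}(T)$ is automatically SOT-closed: if $S_\alpha\to S$ in SOT with each $S_\alpha\in\mathrm{nullOrbRef}(T)$, then for each $x$ the net $S_\alpha x$ lies in the norm-closed set $[\mathrm{nullOrb}(T)x]^{-}$ and converges in norm to $Sx$, so $S\in\mathrm{nullOrbRef}(T)$. Hence once I show $\mathrm{nullOrbRef}(T)=\mathrm{nullOrb}(T)$, null-orbit reflexivity of $T$ follows.

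So let $S\in\mathrm{nullOrbRef}(T)$; I must produce $S=0$ or $S=T^{n}$ for some $n\ge0$. Fix $\lambda$. For $x\in X_\lambda$ the set $\mathrm{nullOrb}(T)x=\{0,x,Tx,T^{2}x,\ldots\}$ lies in $X_\lambda$ by invariance, so $Sx\in[\mathrm{nullOrb}(T)x]^{-}\subseteq X_\lambda$ (using that $X_\lambda$ is closed). Thus $S(X_\lambda)\subseteq X_\lambda$, and the same computation shows $S|_{X_\lambda}\in\mathrm{nullOrbRef}(T|_{X_\lambda})$. Since $T|_{X_\lambda}$ meets the hypothesis of the preceding theorem, $S|_{X_\lambda}\in\mathrm{nullOrb}(T|_{X_\lambda})$, i.e. $S|_{X_\lambda}=0$ or $S|_{X_\lambda}=(T|_{X_\lambda})^{n_\lambda}$ for some $n_\lambda\ge0$.

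Next comes the step that makes the "directed" hypothesis do its work: a single exponent must serve for all $\lambda$. If $S=0$ we are done, so assume $S\ne0$; by continuity of $S$ and density of $\cup_\lambda X_\lambda$ there are $\lambda_0$ and $x\in X_{\lambda_0}$ with $Sx\ne0$, and then $S|_{X_{\lambda_0}}\ne0$ forces $Sx=T^{j}x$ for some $j\ge0$. Given any $\eta\in\Lambda$, use increasing-directedness to choose $\sigma$ with $X_{\lambda_0}\cup X_\eta\subseteq X_\sigma$. Then $x\in X_\sigma$, $S|_{X_\sigma}\in\mathrm{nullOrbRef}(T|_{X_\sigma})$, and $0\ne Sx=(T|_{X_\sigma})^{j}x$, so the "moreover" clause of the preceding theorem forces $S|_{X_\sigma}=(T|_{X_\sigma})^{j}$; restricting to $X_\eta$ gives $S|_{X_\eta}=(T^{j})|_{X_\eta}$. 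Since $\eta$ was arbitrary, $S$ and $T^{j}$ agree on the dense set $\cup_\lambda X_\lambda$, hence $S=T^{j}\in\mathrm{nullOrb}(T)$. The reverse inclusion $\mathrm{nullOrb}(T)\subseteq\mathrm{nullOrbRef}(T)$ is immediate, and if the statement also contains the sharpening that $0\ne Sx=T^{k}x$ implies $S=T^{k}$, it drops out of the same argument with $\lambda_0$ replaced by any $X_\lambda$ containing $x$.

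I expect the only genuine obstacle to be this consistency step — guaranteeing the locally defined exponents $n_\lambda$ are all equal. It is precisely here that one needs the sharpened conclusion of the preceding theorem (a single vector on which $S$ does not vanish pins down the exact power of $T$), not merely the statement that each $S|_{X_\lambda}$ is some power of $T|_{X_\lambda}$; and one needs the directedness of the family to transport a witness vector into a common subspace. Everything else — the reduction to closed $X_\lambda$, the $S$-invariance of each $X_\lambda$, and the automatic SOT-closedness of $\mathrm{nullOrbRef}(T)$ — is routine.
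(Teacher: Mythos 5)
Your argument is correct and follows essentially the same route as the paper's proof: restrict $S$ to each $X_\lambda$, apply the preceding theorem to get a power of $T|_{X_\lambda}$, then use a vector $f$ with $Sf\neq 0$ together with directedness and the ``moreover'' clause to pin down a single exponent on every $X_\lambda$ containing $X_{\lambda_0}$, and conclude by density. Your added remarks on passing to closures and on the SOT-closedness of $\mathrm{nullOrbRef}(T)$ are correct housekeeping that the paper leaves implicit.
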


\begin{enumerate}
\item for every $\lambda\in\Lambda,$ $\cap_{n=1}^{\infty}\overline
{T^{n}\left(  X_{\lambda}\right)  }=\left\{  0\right\}  $, and

\item $\cup_{\lambda\in\Lambda}X_{\lambda}$ is dense in $X.$
\end{enumerate}

Then $T$ is null-orbit reflexive, and $\mathrm{nullOrbRef}\left(  T\right)
=\mathrm{nullOrb}\left(  T\right)  $. Moreover, if $S\in\mathrm{nullOrbRef}%
\left(  T\right)  ,$ $x\in X,$ and $0\neq Sx=T^{k}x$, then $S=T^{k}$.

\begin{proof}
Suppose $0\neq S\in\mathrm{nullOrbRef}\left(  T\right)  $. It follows from
$\left(  2\right)  $ that there is a $\lambda_{0}\in\Lambda$ and an $f\in
X_{\lambda_{0}}$ such that $0\neq Sf.$ However, we must have $S\left(
X_{\lambda_{0}}\right)  \subseteq X_{\lambda_{0}},$ and $S|X_{\lambda_{0}}%
\in\mathrm{nullOrbRef}\left(  T|X_{\lambda_{0}}\right)  =\mathrm{nullOrb}%
\left(  T|X_{\lambda}\right)  $ (by $\left(  1\right)  $ and the preceding
theorem). Thus there is an integer $k\geq0$ such that%
\[
S|X_{_{\lambda_{0}}}=T^{k}|X_{\lambda_{0}}.
\]
The same $k$ must work for any $X_{\lambda}$ that contains $X_{\lambda_{0}}.$
It follows from the fact that the family is increasingly directed and $\left(
2\right)  $ that $S=T^{k}$.\bigskip
\end{proof}

If $T$ is the operator constructed in \cite{MV} that is not orbit reflexive,
it is easy to show that $\cap_{n\geq0}T^{n}\left(  X\right)  ^{-}=0.$

\begin{corollary}
\label{ex}The non orbit reflexive operator constructed in \cite{MV} is
null-orbit reflexive.
\end{corollary}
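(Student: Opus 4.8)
The plan is to apply the immediately preceding theorem, which says that if $\cap_{n=1}^{\infty}\overline{T^{n}(X)}=\{0\}$ then $T$ is null-orbit reflexive with $\mathrm{nullOrbRef}(T)=\mathrm{nullOrb}(T)$. So the only thing to verify is that the operator $T$ from \cite{MV} satisfies this spectral-type hypothesis. First I would recall the precise construction of M\"{u}ller and Vr\v{s}ovsk\'{y}'s example: it is an operator on a separable Hilbert space built from a weighted backward-shift-like mechanism, with the property that $\|T^{n}x\|\to 0$ for every $x$ (i.e. $T$ is what they call ``POTk'' / the orbit of every vector tends to $0$) but $T$ is still not orbit reflexive because $0\in\mathrm{OrbRef}(T)\setminus\mathrm{Orb}(T)^{-SOT}$. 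The key structural facts I would extract from their paper are: (a) $T$ is a norm limit or direct-sum-type assemblage of finite-rank nilpotent-ish pieces, or more precisely (b) $T$ acts so that the ranges $T^{n}(X)$ shrink.

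The main step is to show $\cap_{n\geq 0}\overline{T^{n}(X)}=\{0\}$. I would do this by examining the matrix form of $T$ with respect to the natural orthogonal decomposition $X=\bigoplus_{k}X_{k}$ used in \cite{MV}. Concretely, if $T$ maps $X_{k}$ into (a dense subspace of) $\bigoplus_{j<k}X_{j}$ — which is exactly the strictly-block-upper-triangular shape that makes the earlier corollaries about operator-weighted shifts apply — then $\overline{T^{n}(X)}$ is contained in the closure of $\bigoplus_{k}$ of finitely-many-bottom-blocks, and intersecting over $n$ kills everything. Alternatively, and this is probably cleanest, I would observe that their $T$ is a weighted backward shift type operator for which $T^{n}(X)^{-}$ is contained in the span of coordinates with index $\geq$ something growing with $n$... one must get the direction of the shift right. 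The honest version: I would look at \cite{MV} and note that $T$ is built as $\bigoplus T_{m}$ (or an infinite operator matrix) where each block contributes a nilpotent of increasing order together with a small perturbation, and that $\overline{T^{n}X}$ therefore consists only of tails, forcing the intersection to be $\{0\}$.

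The one genuine obstacle is that the M\"{u}ller--Vr\v{s}ovsk\'{y} operator is \emph{not} literally block-upper-triangular nor literally a weighted shift — it is a more delicate construction designed precisely so that $0$ lies in the orbit-reflexive closure. So I cannot just cite the shift corollary; I need the slightly more robust ``$\cap\overline{T^{n}X}=\{0\}$'' hypothesis, and I have to check it survives the perturbations in their construction. The paper already asserts ``it is easy to show that $\cap_{n\geq 0}T^{n}(X)^{-}=0$'', so presumably the construction gives, for a suitable orthonormal-basis indexing, that $T^{n}$ maps the whole space into the closed span of basis vectors past some index $N(n)\to\infty$; then any $x$ in every $\overline{T^{n}X}$ is orthogonal to every basis vector, hence $x=0$. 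I would write exactly this: recall the basis/indexing from \cite{MV}, exhibit the growing ``support floor'' of $\overline{T^{n}X}$, conclude the intersection is trivial, and then invoke the preceding theorem verbatim to conclude both that $T$ is null-orbit reflexive and that $\mathrm{nullOrbRef}(T)=\mathrm{nullOrb}(T)$.
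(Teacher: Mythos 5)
Your proposal follows exactly the paper's route: the paper's entire justification is the observation that $\cap_{n\geq0}T^{n}\left(X\right)^{-}=\left\{0\right\}$ for the M\"{u}ller--Vr\v{s}ovsk\'{y} operator, followed by an appeal to the immediately preceding theorem. Your sketch of why that intersection is trivial (each $\overline{T^{n}(X)}$ is supported on basis vectors whose index/depth grows with $n$ in the construction of \cite{MV}) is precisely the ``easy to show'' step the paper leaves to the reader, so the argument is correct and essentially identical; the minor misstatements about the dynamics of the example (e.g.\ that $\left\Vert T^{n}x\right\Vert\rightarrow0$ for every $x$) play no role in the proof.
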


\bigskip

Irving Kaplansky \cite{Kap} (see also \cite{Lar}, \cite{Liv} , \cite{M})
proved that a (bounded linear) operator on a Banach space is locally algebraic
if and only if it is algebraic. This immediately gives us the following result
from Theorem \ref{notlocalg}.

\begin{proposition}
\label{algebraic}Suppose $X$ is a real or complex Banach space and $T\in
B\left(  X\right)  $ is not algebraic. Then $T$ is algebraically null-orbit reflexive.
\end{proposition}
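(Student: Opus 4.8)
The plan is to derive this as an immediate corollary of Theorem \ref{notlocalg}, using Kaplansky's theorem as the bridge. First I would invoke the result quoted just above the statement: a bounded linear operator on a Banach space is locally algebraic if and only if it is algebraic. Taking the contrapositive, the hypothesis that $T$ is not algebraic forces $T$ not to be locally algebraic.

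Next I would note that the conclusion to be proved — $\mathrm{nullOrbRef}_{0}\left(T\right)=\mathrm{nullOrb}\left(T\right)$ — is a purely algebraic statement about $T$ regarded as a linear transformation on the vector space $X$; it makes no reference to the norm of $X$ or to boundedness of the transformations $S$ under consideration. Since the scalar field $\mathbb{F}\in\left\{\mathbb{R},\mathbb{C}\right\}$ is infinite and $T\in\mathcal{L}\left(X\right)$ is not locally algebraic, Theorem \ref{notlocalg} applies verbatim and yields that $T$ is algebraically null-orbit reflexive. That completes the argument.

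There is essentially no obstacle: the work has already been done in Theorem \ref{notlocalg} (which in turn rests on the algebraic reflexivity result of \cite{H1}, stated for arbitrary vector spaces over infinite fields), and the only genuinely new input is Kaplansky's theorem, whose proof is not reproduced here. The single point requiring a moment's care is simply the observation that ``algebraically null-orbit reflexive'' is the coordinate-free notion, so that the Banach-space structure of $X$ is used only to license the passage from ``not algebraic'' to ``not locally algebraic'' and plays no further role.
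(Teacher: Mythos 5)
Your proposal is correct and is exactly the paper's argument: the proposition is stated as an immediate consequence of Kaplansky's theorem (locally algebraic $\Leftrightarrow$ algebraic for bounded operators on a Banach space) combined with Theorem \ref{notlocalg}. Your additional remark that the Banach-space structure is used only to pass from ``not algebraic'' to ``not locally algebraic'' is a accurate and worthwhile clarification, but the route is the same.
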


\ \bigskip

\bigskip The results in the paper of \cite{MV} also extend to the null-orbit
case. If $T$ is an operator on a Banach space, then $r\left(  T\right)  $
denotes the spectral radius of $T$, i.e.,%
\[
r\left(  T\right)  =\max\left\{  \left\vert \lambda\right\vert :\lambda
\in\sigma\left(  T\right)  \right\}  \text{.}%
\]

\begin{lemma}
\label{cat}If $X$ is a normed space, $T\in B\left(  X\right)  $ and
\[
E=\left\{  x\in X:\mathrm{nullOrb}\left(  T\right)  x\text{ is norm
closed}\right\}
\]
is not contained in a countable union of nowhere dense subsets of $X$, then
$T$ is null-orbit reflexive and $\mathrm{nullOrbRef}\left(  T\right)
=\mathrm{nullOrb}\left(  T\right)  $. (Note that $E$ contains all $x\in X$
such that $T^{n}x\rightarrow0$ weakly or $\left\Vert T^{n}x\right\Vert
\rightarrow\infty$.)
\end{lemma}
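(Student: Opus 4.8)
The plan is to reduce the whole statement to the single containment $\mathrm{nullOrbRef}\left(  T\right)  \subseteq\mathrm{nullOrb}\left(  T\right)  $. The opposite containment is free: if $S=T^{n}$ then $Sx=T^{n}x\in\mathrm{nullOrb}\left(  T\right)  x\subseteq\left[  \mathrm{nullOrb}\left(  T\right)  x\right]  ^{-}$ for every $x$, and if $S=0$ then $Sx=0\in\mathrm{nullOrb}\left(  T\right)  x$ because the zero transformation lies in $\mathrm{nullOrb}\left(  T\right)  $. Moreover $\mathrm{nullOrbRef}\left(  T\right)  =\bigcap_{x\in X}\left\{  S\in B\left(  X\right)  :Sx\in\left[  \mathrm{nullOrb}\left(  T\right)  x\right]  ^{-}\right\}  $ is an intersection of SOT-closed subsets of $B\left(  X\right)  $, hence SOT-closed. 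So once $\mathrm{nullOrbRef}\left(  T\right)  \subseteq\mathrm{nullOrb}\left(  T\right)  $ is known, $\mathrm{nullOrb}\left(  T\right)  \subseteq\mathrm{nullOrb}\left(  T\right)  ^{-SOT}\subseteq\mathrm{nullOrbRef}\left(  T\right)  =\mathrm{nullOrb}\left(  T\right)  $, which both gives $\mathrm{nullOrbRef}\left(  T\right)  =\mathrm{nullOrb}\left(  T\right)  $ and makes $\mathrm{nullOrbRef}\left(  T\right)  $ the SOT-closure of $\mathrm{nullOrb}\left(  T\right)  $, i.e. $T$ is null-orbit reflexive.

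For the remaining containment, fix $S\in\mathrm{nullOrbRef}\left(  T\right)  $ and look at what happens on $E$. For $x\in E$ the set $\left[  \mathrm{nullOrb}\left(  T\right)  x\right]  ^{-}$ is simply $\mathrm{nullOrb}\left(  T\right)  x=\left\{  0,x,Tx,T^{2}x,\dots\right\}  $, so either $Sx=0$ or $Sx=T^{k}x$ for some $k\geq0$. Hence $E$ is covered by the countable family consisting of $G=E\cap\ker S$ together with $F_{k}=E\cap\ker\left(  S-T^{k}\right)  $ for $k=0,1,2,\dots$. Since by hypothesis $E$ is not contained in a countable union of nowhere dense sets, at least one of $G,F_{0},F_{1},\dots$ fails to be nowhere dense, i.e. its closure contains a nonempty open ball.

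The decisive point is that $G$ sits in the closed linear subspace $\ker S$ and each $F_{k}$ sits in the closed linear subspace $\ker\left(  S-T^{k}\right)  $, and a closed linear subspace of a normed space that contains a nonempty open ball must be all of $X$ (translate the ball to $0$ and dilate). Thus if $\overline{G}$ has interior then $\ker S=X$ and $S=0$, and if $\overline{F_{k}}$ has interior then $\ker\left(  S-T^{k}\right)  =X$ and $S=T^{k}$. In either case $S\in\mathrm{nullOrb}\left(  T\right)  $, which completes the argument.

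For the parenthetical remark it is enough to check that $\mathrm{nullOrb}\left(  T\right)  x$ is norm closed when $\left\Vert T^{n}x\right\Vert \rightarrow\infty$ or $T^{n}x\rightarrow0$ weakly. If $\left\Vert T^{n}x\right\Vert \rightarrow\infty$, a norm-convergent sequence taken from $\left\{  0,x,Tx,\dots\right\}  $ is bounded, hence meets only finitely many of the $T^{n}x$, so it is eventually constant and its limit lies in the set. If $T^{n}x\rightarrow0$ weakly, a norm-convergent (hence weakly convergent) sequence from the set that used infinitely many distinct powers would have a subsequence $T^{n_{j}}x\rightarrow0$ weakly, forcing the limit to be $0\in\mathrm{nullOrb}\left(  T\right)  x$; otherwise only finitely many values occur and the limit again lies in the set. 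I do not anticipate a serious obstacle: the only steps needing a little care are this verification and the passage from ``one member of the cover is not nowhere dense'' to ``the corresponding closed subspace is the whole space''; everything else is formal manipulation of the definitions.
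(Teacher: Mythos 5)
Your proof is correct and is essentially the paper's argument: cover $E$ by the countably many closed subspaces $\ker S$ and $\ker\left(S-T^{k}\right)$, invoke the category hypothesis to find one with nonempty interior, and conclude it is all of $X$. The paper states this in one line; you have merely spelled out the same steps (including the easy verifications of the reverse containment, the SOT-closedness of $\mathrm{nullOrbRef}\left(T\right)$, and the parenthetical remark), all of which check out.
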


\begin{proof}
If $S\in\mathrm{\mathrm{nullOrbRef}\left(  T\right)  }$, then $E\subseteq
\cup_{A\in\mathrm{nullOrb}\left(  T\right)  }\ker\left(  S-A\right)  ,$ so
there is an $A\in\mathrm{nullOrb}\left(  T\right)  $ such that $\ker\left(
S-A\right)  $ has nonempty interior, which means that $S=A$.
\end{proof}

\bigskip

\begin{corollary}
If $X$ is a Banach space, $T\in B\left(  X\right)  $ and $r\left(  T\right)
<1$, then $T$ is null-orbit reflexive.
\end{corollary}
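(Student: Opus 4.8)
The plan is to derive this as an immediate consequence of Lemma \ref{cat}. The key observation is that when $r(T) < 1$, the spectral radius formula gives $\|T^n\|^{1/n} \to r(T) < 1$, so for any $\rho$ with $r(T) < \rho < 1$ we have $\|T^n\| \le \rho^n$ for all sufficiently large $n$; hence $\|T^n x\| \to 0$ for every $x \in X$. In particular $T^n x \to 0$ in norm, so certainly $T^n x \to 0$ weakly, which by the parenthetical remark in Lemma \ref{cat} places every $x \in X$ in the set $E$. Thus $E = X$, which is patently not a countable union of nowhere dense subsets (by the Baire category theorem, since $X$ is a Banach space), so Lemma \ref{cat} applies directly.

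First I would record the spectral radius formula $r(T) = \lim_n \|T^n\|^{1/n}$ and conclude $\sup_n \rho^{-n}\|T^n\| < \infty$ for a suitable $\rho \in (r(T),1)$, giving uniform decay $\|T^n\| \to 0$. Next I would note that this forces $\mathrm{nullOrb}(T)x = \{0, x, Tx, T^2x, \ldots\}$ to be norm closed for every $x$: the only possible accumulation point of the sequence $T^n x$ is $0$, which is already in the set, and a sequence in the set is eventually either constant or convergent to $0$. So $E = X$. Finally, since a Banach space is a Baire space and hence not a countable union of nowhere dense sets, the hypothesis of Lemma \ref{cat} is satisfied, and we conclude $T$ is null-orbit reflexive with $\mathrm{nullOrbRef}(T) = \mathrm{nullOrb}(T)$.

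There is essentially no obstacle here — the corollary is a one-line deduction from Lemma \ref{cat} once one recalls that $r(T) < 1$ implies the orbit converges to $0$. The only point requiring even minimal care is confirming that norm convergence $T^n x \to 0$ really does make $\mathrm{nullOrb}(T)x$ closed (as opposed to merely placing $x$ in $E$ via the stated sufficient condition), but since the parenthetical in Lemma \ref{cat} already guarantees membership in $E$ from weak convergence to $0$, even this is handed to us. The proof is therefore just: $r(T)<1 \Rightarrow T^nx \to 0$ for all $x \Rightarrow E = X \Rightarrow$ apply Lemma \ref{cat}.
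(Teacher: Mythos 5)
Your proof is correct and follows exactly the paper's route: the paper likewise notes that $r(T)<1$ forces $\Vert T^{n}\Vert\rightarrow 0$, so the set $E$ of Lemma \ref{cat} is all of $X$, and the Baire category hypothesis is trivially satisfied. Your write-up just supplies the details (spectral radius formula, closedness of the orbit) that the paper leaves implicit.
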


\begin{proof}
It follows that $\left\Vert T^{n}\right\Vert \rightarrow0,$ and thus the set
$E$ in Lemma \ref{cat} is all of $X$.\bigskip
\end{proof}

The proof of the following theorem is almost exactly the same as the proof of
Theorem 7 in \cite{MV}.

\begin{theorem}
\label{MV}If $X$ is a Banach space and $T\in B\left(  X\right)  $ and $%
{\displaystyle\sum_{n=1}^{\infty}}
\frac{1}{\left\Vert T^{n}\right\Vert }<\infty$, then $T$ is null-orbit
reflexive. If $X$ is a Hilbert space and $%
{\displaystyle\sum_{n=1}^{\infty}}
\frac{1}{\left\Vert T^{n}\right\Vert ^{2}}<\infty$, then $T$ is null-orbit
reflexive. In particular, if $r\left(  T\right)  \neq1,$ then $T$ is
null-orbit reflexive.
\end{theorem}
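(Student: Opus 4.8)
The plan is to transcribe the proof of Theorem~7 of \cite{MV}, the only new ingredient being the trivial observation that $0\in\mathrm{nullOrb}\left(T\right)$. We may assume $X\neq\left\{0\right\}$, and we let $S\in\mathrm{nullOrbRef}\left(T\right)$; it suffices to prove $S\in\mathrm{nullOrb}\left(T\right)$. Indeed, $\mathrm{nullOrb}\left(T\right)$ always lies in its strong-operator closure, which in turn lies in $\mathrm{nullOrbRef}\left(T\right)$, so having $S\in\mathrm{nullOrb}\left(T\right)$ for every such $S$ would give $\mathrm{nullOrbRef}\left(T\right)=\mathrm{nullOrb}\left(T\right)$, and a fortiori $\mathrm{nullOrbRef}\left(T\right)$ equal to the strong-operator closure of $\mathrm{nullOrb}\left(T\right)$, which is the assertion. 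If $S=0$ or $S=T^{k}$ for some $k\geq0$ we are done, so suppose $S\neq0$ and $S\neq T^{k}$ for all $k\geq0$, and put
\[
E=\ker S\cup\bigcup_{k\geq0}\ker\left(S-T^{k}\right).
\]
Each of the closed subspaces $\ker S$ and $\ker\left(S-T^{k}\right)$ is proper, hence nowhere dense, so $E$ is of the first category; since $X$ is complete, $X\setminus E$ is of the second category. If $x\in X\setminus E$, then $Sx\neq0$ and $Sx\in\left[\mathrm{nullOrb}\left(T\right)x\right]^{-}=\left\{0\right\}\cup\overline{\left\{T^{n}x:n\geq0\right\}}$, so $Sx\in\overline{\left\{T^{n}x:n\geq0\right\}}$; since $Sx\neq T^{k}x$ for all $k$, $Sx$ is a limit point of the sequence $\left\{T^{n}x\right\}_{n\geq0}$, so there is $n_{j}\to\infty$ with $T^{n_{j}}x\to Sx$, and therefore $\liminf_{n}\left\Vert T^{n}x\right\Vert\leq\left\Vert Sx\right\Vert<\infty$. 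Consequently $X\setminus E\subseteq\left\{x\in X:\liminf_{n}\left\Vert T^{n}x\right\Vert<\infty\right\}$, so the latter set is of the second category.

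The heart of the argument is then the category estimate implicit in \cite[Theorem~7]{MV}: if $X$ is a Banach space with $\sum_{n}1/\left\Vert T^{n}\right\Vert<\infty$, or $X$ is a Hilbert space with $\sum_{n}1/\left\Vert T^{n}\right\Vert^{2}<\infty$, then $\left\{x\in X:\liminf_{n}\left\Vert T^{n}x\right\Vert<\infty\right\}$ is of the first category. Granting this, the last sentence of the preceding paragraph is contradictory, so in fact $S=0$ or $S=T^{k}$, which proves the first two assertions (and shows $\mathrm{nullOrbRef}\left(T\right)=\mathrm{nullOrb}\left(T\right)$). I expect this estimate to be the only real obstacle. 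To prove it I would follow \cite{MV} verbatim: writing the set as $\bigcup_{C\in\mathbb{N}}\bigcap_{N\geq1}\bigcup_{n\geq N}\left(T^{n}\right)^{-1}\!\left(C\,\overline{B_{X}}\right)$, and using the summability of $1/\left\Vert T^{n}\right\Vert$ in the Banach case, respectively of $1/\left\Vert T^{n}\right\Vert^{2}$ together with Hilbert-space orthogonality in the Hilbert case, to show that each of these sets is meager.

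For the final clause, if $r\left(T\right)<1$ the conclusion is exactly the corollary preceding this theorem. If $r\left(T\right)>1$, then $T$ is not nilpotent, so $\left\Vert T^{n}\right\Vert>0$ for every $n$, and choosing $\rho$ with $1<\rho<r\left(T\right)$ we get $\left\Vert T^{n}\right\Vert>\rho^{n}$ for all large $n$ from the spectral-radius formula; hence both $\sum_{n}1/\left\Vert T^{n}\right\Vert$ and $\sum_{n}1/\left\Vert T^{n}\right\Vert^{2}$ converge, and the first part of the theorem applies.
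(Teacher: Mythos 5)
Your proposal is correct and is essentially the paper's own argument: the paper gives no proof beyond remarking that it is ``almost exactly the same as the proof of Theorem 7 in \cite{MV}'', and your write-up is precisely that adaptation --- adding $\ker S$ to the countable union of proper closed kernels to account for $0\in\mathrm{nullOrb}(T)$, and invoking the M\"{u}ller--Vr\v{s}ovsk\'{y} category estimate that $\{x:\liminf_n\Vert T^nx\Vert<\infty\}$ is meager under either summability hypothesis. The handling of the final clause via the spectral radius formula is also the intended one.
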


\begin{corollary}
The set of null-orbit reflexive operators on a Banach space $X$ is norm dense
in $B\left(  X\right)  $.\bigskip\ 
\end{corollary}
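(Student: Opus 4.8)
The plan is to deduce this from the last sentence of Theorem~\ref{MV}, namely that every operator whose spectral radius is different from $1$ is null-orbit reflexive. Given an arbitrary $T\in B(X)$ and $\varepsilon>0$, I would look for a null-orbit reflexive operator within distance $\varepsilon$ of $T$ among the scalar multiples $\lambda T$ with $\lambda$ a positive real number close to $1$. The point is that $r(\lambda T)=\lambda\, r(T)$, so by choosing $\lambda$ appropriately we can force the spectral radius off the value $1$ while keeping $\|\lambda T-T\|=|\lambda-1|\,\|T\|$ as small as we wish.

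Concretely, first dispose of the trivial case: if $r(T)=0$, then $r(T)\neq1$, so $T$ itself is already null-orbit reflexive by Theorem~\ref{MV}, and there is nothing to prove. Otherwise $r(T)>0$, and I would pick a real number $\lambda$ with
\[
0<|\lambda-1|<\frac{\varepsilon}{\|T\|+1}\qquad\text{and}\qquad \lambda\neq\frac{1}{r(T)}.
\]
Such a $\lambda$ exists because we are excluding only the single value $1/r(T)$ from an interval around $1$; we may also take $\lambda>0$, which is automatic once $\lambda$ is chosen close enough to $1$. Then $r(\lambda T)=\lambda\, r(T)\neq1$, so $\lambda T$ is null-orbit reflexive by Theorem~\ref{MV}, while $\|\lambda T-T\|=|\lambda-1|\,\|T\|<\varepsilon$. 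Since $T$ and $\varepsilon$ were arbitrary, the set of null-orbit reflexive operators is norm dense in $B(X)$.

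There is really no serious obstacle here: the corollary is a one-step consequence of Theorem~\ref{MV}, and the only thing to be mildly careful about is that the perturbation $\lambda\mapsto\lambda T$ must avoid the at-most-one value of $\lambda$ for which $\lambda\, r(T)=1$, which does not interfere with density. One could equivalently phrase the argument with complex $\lambda$ of modulus near $1$, using $r(\lambda T)=|\lambda|\,r(T)$, but the real-scalar version above is the cleanest.
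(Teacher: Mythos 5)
Your argument is correct and is exactly the intended one: the paper states this corollary without proof as an immediate consequence of the last assertion of Theorem~\ref{MV}, and the scaling $T\mapsto\lambda T$ to push $r(\lambda T)=\lambda\,r(T)$ off the value $1$ while keeping $\|\lambda T-T\|=|\lambda-1|\,\|T\|$ small is the evident way to do it. Your handling of the degenerate case $r(T)=0$ and the exclusion of the single value $\lambda=1/r(T)$ are both fine.
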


\begin{theorem}
\label{polybdd}If $X$ is a Hilbert space and $T\in B\left(  X\right)  $ and is
polynomially bounded, then $T$ is null-orbit reflexive and orbit reflexive.
\end{theorem}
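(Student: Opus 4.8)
The plan is to reduce the polynomially bounded case to the situation handled by the earlier results, using the structure theory of polynomially bounded operators. The key classical fact is the decomposition (due to Mlak, and used in this circle of ideas) that a polynomially bounded operator $T$ on a Hilbert space $X$ splits as $T = T_a \oplus T_s$ with respect to a reducing decomposition $X = X_a \oplus X_s$, where $T_a$ has an $H^\infty$ functional calculus (is ``absolutely continuous'', similar to a part of a functional model) and $T_s$ is similar to a unitary operator with singular spectral measure --- in fact one can push further and arrange that $T_s$ itself, being similar to a singular unitary, has a reasonably tame orbit. Since $\mathrm{nullOrb}(T) = \{0,1,T,T^2,\dots\}$ respects the reducing decomposition, and since both $\mathrm{nullOrbRef}$ and $\mathrm{OrbRef}$ behave well under direct sums (an operator $S$ in $\mathrm{nullOrbRef}(T)$ compresses to each reducing summand), it suffices to treat each piece separately and then recombine.

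First I would handle the case $r(T) \neq 1$, which is already covered by Theorem \ref{MV}; so we may assume $r(T) = 1$, and after the Mlak splitting we may treat $T_a$ and $T_s$ separately. For $T_s$ similar to a unitary $U$ with singular scalar spectral measure: conjugating by the similarity does not preserve $\mathrm{nullOrb}$ exactly, but it does preserve the relevant closures up to bounded invertible factors, so one reduces to showing that a singular unitary $U$ is null-orbit and orbit reflexive. For a unitary operator this follows from Proposition \ref{nor} (or directly from Lemma \ref{normal}): $\{0,1,U,U^2,\dots\}$ is a commuting family of normal operators, so $\mathrm{nullOrbRef}(U)$ lies in its SOT-closure; the orbit-reflexive conclusion for $U$ is the original result of \cite{HNRR}. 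The similarity transfer is the routine bookkeeping step. For the absolutely continuous part $T_a$: here $\|T_a^n\|$ is bounded (polynomial boundedness gives $\|p(T)\| \le C\|p\|_\infty$, so $\|T^n\| \le C$), and the orbit $\{T_a^n\}$ sits inside the image of the closed unit ball of $H^\infty$ under a bounded algebra homomorphism; the weak-$*$ to WOT continuity of this homomorphism lets one identify SOT-limits of $\mathrm{nullOrb}(T_a)$ with $\{0\} \cup \{\varphi(T_a): \varphi \in H^\infty_1, \varphi \text{ a weak-}* \text{ limit of monomials}\}$, and the monomials $z^n$ tend weak-$*$ to $0$ in $H^\infty$. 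One then checks that if $S \in \mathrm{nullOrbRef}(T_a)$ then for each $x$, $Sx$ lies in the norm closure of $\{0, x, T_a x, T_a^2 x,\dots\}$; a Baire-category / separating-vector argument in the spirit of Lemma \ref{cat} and Theorem \ref{notlocalg} then forces $S$ to be a single $T_a^k$ or $0$.

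The main obstacle I expect is the absolutely continuous part: unlike the hypotheses of Lemma \ref{cat}, here the orbit need not have norm-closed traces $\mathrm{nullOrb}(T_a)x$ for a large set of $x$ (the orbit can cluster without converging), so one cannot simply invoke the category lemma. The real work is to show that an $S$ with $Sx \in [\mathrm{nullOrb}(T_a)x]^-$ for all $x$ is actually a power of $T_a$ or zero --- this is exactly where polynomial boundedness must be used in an essential way, presumably by running the argument of \cite[Theorem 7]{MV} (which the statement explicitly says the proof mimics): that argument uses the summability of $1/\|T^n\|^2$, and in the polynomially bounded absolutely continuous case one has $\|T_a^n\| \le C$, so $\sum 1/\|T_a^n\|^2 = \infty$ and Theorem \ref{MV} does \emph{not} apply directly --- hence one instead exploits that $T_a^n \to 0$ weakly (a consequence of the $H^\infty$ calculus together with $z^n \to 0$ weak-$*$), which places $0$ in the WOT-closure and lets one separate the ``tail'' of the orbit from any fixed finite initial segment, closing the argument.
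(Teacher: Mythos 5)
Your starting point matches the paper's: Mlak's decomposition $T\simeq U\oplus A$ with $U$ unitary and $A$ having a weakly continuous $H^\infty$ calculus (so $A^n\to 0$ in WOT), Proposition \ref{nor} for the unitary summand, and Lemma \ref{cat} for the other summand. But there are two problems, one of which is a genuine gap.

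First, the step you flag as the ``main obstacle'' is not an obstacle. Since $A^n\to 0$ weakly, for \emph{every} $x$ the set $\mathrm{nullOrb}(A)x$ is norm closed: a norm-convergent subsequence $A^{n_k}x$ with $n_k\to\infty$ must converge to its weak limit $0$, which belongs to the null-orbit of $x$ (this is exactly the point of the parenthetical remark in Lemma \ref{cat}, and exactly where ``null-orbit'' beats ``orbit''). So Lemma \ref{cat} applies directly and gives $\mathrm{nullOrbRef}(A)=\mathrm{nullOrb}(A)$ with $E=X$; no new argument is needed for the absolutely continuous part in isolation.

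Second, and this is the real gap: ``treat each piece separately and then recombine'' does not work for reflexivity-type properties, and the recombination is where all the work in the paper lies. From $S=B\oplus C$ you get $B\in\mathrm{nullOrbRef}(U)=\mathrm{nullOrb}(U)^{-SOT}$ and $C\in\mathrm{nullOrb}(A)$, but to conclude $S\in\mathrm{nullOrb}(T)^{-SOT}$ you must produce a \emph{single} net of powers $T^{n_\lambda}=U^{n_\lambda}\oplus A^{n_\lambda}$ converging to $B\oplus C$ simultaneously on both summands. The paper does this by cases. If $C=A^s\neq 0$, it uses $\ker(A^k-1)=0$ (a consequence of $A^n\to 0$ WOT) to show that any powers approximating $S$ on vectors $x\oplus y_1$ with $A^sy_1\neq 0$ must eventually equal $s$, forcing $B=U^s$ and $S=T^s$ exactly. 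If $C=0$ and $B\neq 0$, it first upgrades $A^n\to 0$ from WOT to SOT (using the uniform bound $\sup_n\|A^n\|<\infty$ and the submultiplicative estimate $\|A^ny\|\le\|A^{n-n_k}\|\,\|A^{n_k}y\|$), and then shows $B$ lies in the SOT-closure of $\{U^n:n>m\}$ for every $m$, so that the approximating powers can be taken arbitrarily large and the $A$-coordinate tends to $0$ along the same net. None of this matching appears in your proposal, and without it the argument does not close.
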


\begin{proof}
We prove the null-orbit reflexivity; the orbit reflexivity is proved in a
similar fashion. Suppose $T$ is polynomially bounded. It was proved by W. Mlak
\cite{Mlak} that $T$ is similar to the direct sum of a unitary operator $U$
and an operator $A$ with a weakly continuous $H^{\infty}$ functional calculus.
In particular, $A^{n}\rightarrow0$ in the weak operator topology. We can
assume $T=U\oplus A.$ We can also assume that the $A$ summand is present;
otherwise, $T$ is null-orbit reflexive by Proposition \ref{nor}. Since
$A^{n}\rightarrow0$ in WOT, it follows from Lemma \ref{cat} that
$\mathrm{nullOrbRef}\left(  A\right)  =\mathrm{nullOrb}\left(  A\right)  $.
Hence we can assume that the $U$ summand is also present. Suppose
$S\in\mathrm{nullOrbRef}\left(  T\right)  $. Then we can write $S=B\oplus C$.
Hence $C\in\mathrm{nullOrb}\left(  A\right)  .$ We also know that
$B\in\mathrm{nullOrbRef}\left(  U\right)  $.

\textbf{Case 1. } $C=0,$ and $B\neq0.$ For a fixed $x_{0}$ with $Bx_{0}\neq0$
and any $y$ there is a sequence $\left\{  n_{k}\right\}  $ of nonnegative
integers such that $\left\Vert T^{n_{k}}\left(  x_{0}\oplus y\right)
-Bx_{0}\oplus0\right\Vert \rightarrow0.$ In particular, $\left\Vert A^{n_{k}%
}y\right\Vert \rightarrow0.$ However, $A^{n}\rightarrow0$ WOT implies there is
an $M>0$ such that $\left\Vert A^{n}\right\Vert <M$ for all $n\geq0$. We want
to show $\left\Vert A^{n}y\right\Vert \rightarrow0.$ Suppose $\varepsilon>0.$
Then there is an $n_{k}$ such that $\left\Vert A^{n_{k}}y\right\Vert
<\varepsilon/M$. If $n\geq n_{k}$, then
\[
\left\Vert A^{n}y\right\Vert \leq\left\Vert A^{n-n_{k}}\right\Vert \left\Vert
A_{n_{k}}y\right\Vert <M\left(  \varepsilon/M\right)  =\varepsilon.
\]
We now know that $A^{n}\rightarrow0$ in the strong operator topology.

Now suppose $m\geq0$ and $A^{m}\neq0.$ Choose $y_{0}$ such that $A^{m}%
y_{0}\neq0.$ For any $x,$ there is a sequence $\left\{  n_{k}\right\}  $ of
integers such that $T^{n_{k}}\left(  x\oplus y_{0}\right)  \rightarrow
S\left(  x\oplus y_{0}\right)  ,$ and it follows that eventually $n_{k}>m.$
Thus, for every $x$ we have $Bx\in\left\{  U^{n}x:n>m\right\}  ,$ so it
follows from Lemma \ref{cat} that $B\in\left\{  U^{n}:n>m\right\}  ^{-SOT}.$
It now follows that there is a net $\left\{  n_{\lambda}\right\}  $ of
positive integers such that $T^{n_{\lambda}}\rightarrow S$ in the strong
operator topology.

\textbf{Case 2.} $C\neq0.$ Since $C\in\mathrm{nullOrb}\left(  A\right)  $,
there is an integer $s\geq0$ such that $C=A^{s}\neq0$. Since $A^{n}%
\rightarrow0$ in the $WOT$, it follows that $Ker\left(  A^{k}-1\right)  =0$
for $k>0$. Thus if $A^{n}y=A^{m}y$ with $n<m,$ then $\left(  A^{m-n}-1\right)
A^{n}y=0,$ which implies that $A^{n}x=0$ and therefore $A^{m}x=0.$ Choose
$y_{1}$ so that $A^{s}y_{1}\neq0$. It follows that if $\left\{  n_{k}\right\}
$ is a sequence of nonnegative integers and $A^{n_{k}}y_{1}\rightarrow
A^{s}y_{1},$ then $n_{k}$ must eventually become $s.$ By considering vectors
of the form $x\oplus y_{1}$, we see that $B=U^{s},$ and therefore $S=T^{s}$.

Since the only remaining case is $S=0\in\mathrm{nullOrb}\left(  T\right)  ,$
the proof is complete.

\bigskip
\end{proof}

\begin{corollary}
If $T$ is a Hilbert space operator and $\left\Vert T\right\Vert \leq1,$ then
$T$ is null-orbit reflexive.
\end{corollary}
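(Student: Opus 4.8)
The plan is to obtain this as an immediate consequence of Theorem \ref{polybdd}, once we observe that every Hilbert space contraction is polynomially bounded. First I would invoke von Neumann's inequality: if $X$ is a Hilbert space and $\left\Vert T\right\Vert \leq 1$, then for every polynomial $p$ we have $\left\Vert p(T)\right\Vert \leq \sup\{|p(z)| : |z|\leq 1\}$. (If one wants to be careful about the real case, complexify: the complexification of a real contraction is a complex contraction with the same polynomial calculus on real polynomials.) Thus $T$ is polynomially bounded with constant $1$, and Theorem \ref{polybdd} gives at once that $T$ is both null-orbit reflexive and orbit reflexive.

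I would then add a short remark explaining why the lighter machinery earlier in the section does not already cover this case, so the reader sees where the content actually lies. A contraction may well have $r(T)=1$ — the identity operator, or any nonunitary isometry such as the unilateral shift — so neither Theorem \ref{MV} nor the corollary for $r(T)<1$ applies directly; and the unitary part of $T$ need not satisfy any decay of $\left\{U^{n}\right\}$, so Lemma \ref{cat} is not available for it either. The genuine input is Mlak's theorem, used inside the proof of Theorem \ref{polybdd}: a polynomially bounded $T$ is similar to $U\oplus A$ with $A^{n}\to 0$ weakly, after which the null-orbit analysis splits across the two summands. There is essentially no obstacle here — the only point requiring attention is that von Neumann's inequality, and hence polynomial boundedness of contractions, is a Hilbert-space phenomenon, which is precisely why the hypothesis of the corollary is stated for Hilbert space operators rather than general Banach space contractions.
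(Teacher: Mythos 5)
Your proof is correct and is exactly the intended argument: the corollary is stated immediately after Theorem \ref{polybdd} precisely because von Neumann's inequality makes every Hilbert space contraction polynomially bounded (with constant $1$), so the theorem applies directly. Your added remarks about why the lighter results (Theorem \ref{MV}, Lemma \ref{cat}) do not suffice are accurate but not needed for the proof itself.
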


\bigskip

\begin{corollary}
If $T$ is a Hilbert space operator with $\left\Vert T\right\Vert =r\left(
T\right)  $ (e.g., $T$ is hyponormal), then $T$ is null-orbit reflexive.
\end{corollary}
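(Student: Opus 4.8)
The plan is to reduce to the two cases already dispatched in the proof of Theorem~\ref{polybdd}. First I would observe that if $\|T\| = r(T)$, then either $r(T) \neq 1$ or $r(T) = 1$. In the former case Theorem~\ref{MV} immediately gives that $T$ is null-orbit reflexive, so the only remaining case is $\|T\| = r(T) = 1$. Then $\|T\| = 1$, and in particular $T$ is a contraction, hence trivially polynomially bounded (indeed $\|p(T)\| \le \|p\|_\infty$ on the disk by von Neumann's inequality, though one only needs $\|T^n\| \le 1$ to run the argument of Theorem~\ref{polybdd}). Therefore $T$ is null-orbit reflexive by Theorem~\ref{polybdd}. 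This makes the corollary a one-line consequence of the two preceding results.

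The only point requiring a remark is the parenthetical claim that hyponormal operators satisfy $\|T\| = r(T)$. I would recall that a hyponormal operator $T$ (meaning $T^*T \ge TT^*$) satisfies $\|T^n\| = \|T\|^n$ for all $n$: the standard computation shows $\|Tx\|^2 = \langle T^*Tx, x\rangle \ge \langle TT^*x, x\rangle = \|T^*x\|^2$, and from the power inequality $\|T^2 x\| \|x\| \ge \|T^*Tx\|\,\ldots$ one derives $\|T^n\| = \|T\|^n$; hence the spectral radius formula $r(T) = \lim \|T^n\|^{1/n} = \|T\|$. Since this is classical, I would simply cite it or state it without proof as the justification for the example.

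The main obstacle here is essentially nonexistent — the corollary is a packaging of Theorem~\ref{MV} and Theorem~\ref{polybdd}. If I wanted to avoid invoking the (somewhat heavy) machinery behind Theorem~\ref{polybdd} via Mlak's theorem, I could instead note directly that when $\|T\| = 1$ we have $\|T^n\| \le 1$ for all $n$, so $\mathrm{nullOrb}(T)x$ lies in the closed unit ball scaled by $\|x\|$; but this alone does not close the orbit, so one genuinely does need the polynomial-boundedness route (or the Mlak decomposition) to handle the unitary part. Thus the cleanest write-up is: dispose of $r(T) \neq 1$ by Theorem~\ref{MV}, dispose of the remaining contraction case by Theorem~\ref{polybdd}, and cite the hyponormal norm-equals-spectral-radius fact for the example. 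The proof is therefore short and I would expect no technical difficulty beyond correctly invoking the earlier results.
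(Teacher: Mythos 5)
Your proof is correct and follows the route the paper clearly intends: dispose of $r(T)\neq 1$ via Theorem \ref{MV}, and reduce the case $\left\Vert T\right\Vert =r\left(  T\right)  =1$ to the contraction case already covered by Theorem \ref{polybdd} (equivalently, the immediately preceding corollary), with the classical fact $\left\Vert T^{n}\right\Vert =\left\Vert T\right\Vert ^{n}$ for hyponormal $T$ justifying the parenthetical example. No issues.
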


\bigskip

The following lemma is a consequence of Theorem \ref{MV}.

\bigskip

\begin{lemma}
\label{last}Suppose $X$ is a Hilbert space, $T\in B\left(  X\right)  ,$
$\lambda\in\mathbb{C}$ with $\left\vert \lambda\right\vert =1.$ If
$\ker\left(  T-\lambda\right)  \neq\ker\left(  T-\lambda\right)  ^{2},$ then
$T$ is null orbit reflexive.
\end{lemma}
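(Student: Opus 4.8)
The plan is to reduce the statement to Theorem \ref{MV} by showing that the hypothesis forces $\|T^{n}\|$ to grow at least linearly in $n$, so that $\sum_{n}\|T^{n}\|^{-2}<\infty$ and the Hilbert-space criterion of that theorem applies.

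First I would use the hypothesis. Since $\ker(T-\lambda)\subseteq\ker(T-\lambda)^{2}$ always holds, $\ker(T-\lambda)\neq\ker(T-\lambda)^{2}$ yields a vector $v$ with $(T-\lambda)^{2}v=0$ but $w:=(T-\lambda)v\neq0$. Then $Tw=\lambda w$ and $Tv=\lambda v+w$, and a one-line induction gives $T^{n}v=\lambda^{n}v+n\lambda^{n-1}w$ for every $n\geq1$. Because $|\lambda|=1$ we have $\lambda^{n}\overline{\lambda^{n-1}}=\lambda$, so expanding the norm in the Hilbert-space inner product gives
\[
\|T^{n}v\|^{2}=\|v\|^{2}+2n\operatorname{Re}\bigl(\lambda\langle v,w\rangle\bigr)+n^{2}\|w\|^{2}.
\]
Since $w\neq0$, the right-hand side is at least $\tfrac12 n^{2}\|w\|^{2}$ once $n\geq 4|\langle v,w\rangle|/\|w\|^{2}$, so $\|T^{n}v\|\geq \tfrac{\|w\|}{\sqrt{2}}\,n$ for all large $n$, whence $\|T^{n}\|\geq\|T^{n}v\|/\|v\|\geq c\,n$ for all $n$ past some $N$, with $c=\|w\|/(\sqrt{2}\,\|v\|)$.

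Then I would note that $T$ cannot be nilpotent: if $T^{m}=0$ then $\sigma(T)=\{0\}$, whereas the hypothesis forces $\ker(T-\lambda)\neq0$, so $\lambda$ is an eigenvalue and $\lambda\in\sigma(T)$. Hence $\|T^{n}\|>0$ for every $n\geq1$, and $\sum_{n=1}^{\infty}\|T^{n}\|^{-2}$ is a finite sum plus a tail bounded by $c^{-2}\sum_{n\geq N}n^{-2}<\infty$. Theorem \ref{MV} (the Hilbert-space version) then gives that $T$ is null-orbit reflexive.

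I do not expect a genuine obstacle here. The only points that need a little care are carrying out the induction and the inner-product expansion correctly, using $|\lambda|=1$ to collapse $\lambda^{n}\overline{\lambda^{n-1}}$ to $\lambda$, and observing that $\|T^{n}\|$ is never zero so the series in Theorem \ref{MV} is meaningful. One could instead try to route the argument through Lemma \ref{cat} using $\|T^{n}v\|\to\infty$, but that only places a single vector (or coset) in the set $E$ of that lemma, which need not be non-meager, so the quantitative route via Theorem \ref{MV} is the clean one.
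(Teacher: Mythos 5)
Your proposal is correct and follows essentially the same route as the paper: both compute $T^{n}v=\lambda^{n}v+n\lambda^{n-1}(T-\lambda)v$ from the binomial expansion, deduce that $\Vert T^{n}\Vert$ grows at least linearly, and invoke Theorem \ref{MV}. The only (cosmetic) difference is that the paper gets the lower bound $\Vert T^{n}v\Vert\geq n\Vert(T-\lambda)v\Vert-\Vert v\Vert$ from the triangle inequality rather than expanding the inner product.
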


\begin{proof}
Suppose $\left\Vert x\right\Vert =1$ and $\left(  T-\lambda\right)  ^{2}x=0$
and $\left(  T-\lambda\right)  x\neq0.$ It follows that
\[
\left\Vert T^{n}x\right\Vert =\left\Vert \left[  \lambda+\left(
T-\lambda\right)  \right]  ^{n}x\right\Vert =\left\Vert \lambda^{n}x+n\left(
T-\lambda\right)  x\right\Vert \geq n\left\Vert \left(  T-\lambda\right)
x\right\Vert -\left\Vert x\right\Vert \geq\varepsilon n
\]
for some $\varepsilon>0$ and for sufficiently large $n$. Thus $%
{\displaystyle\sum}
1/\left\Vert T^{n}\right\Vert ^{2}<\infty$, which, by Theorem \ref{MV},
implies $T$ is null-orbit reflexive.
\end{proof}

\bigskip

\begin{theorem}
Suppose $X$ is a Hilbert space, $T\in B\left(  X\right)  ,$ $r\left(
T\right)  =1$ and no point in $E=\sigma\left(  T\right)  \cap\left\{
z\in\mathbb{C}:\left\vert z\right\vert =1\right\}  $ is a limit point of the
spectrum of $T$. If the restriction of $T$ to the spectral subspace $M_{E}$
for the clopen subset $E$ of $\sigma\left(  T\right)  $ is an algebraic
operator, then $T$ is null-orbit reflexive. In particular, every compact
operator, or algebraic operator on a Hilbert space is null-orbit reflexive.
Hence every operator on a finite-dimensional space is null-orbit reflexive.
\end{theorem}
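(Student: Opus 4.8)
The plan is to use the Riesz functional-calculus decomposition to split $T$ into a piece whose spectrum touches the unit circle only at finitely many isolated points and a piece whose spectral radius is $\neq 1$, handle the second piece by Theorem~\ref{MV}, and handle the first piece by Lemma~\ref{last} together with the algebraic reflexivity machinery of Section~2. First I would observe that since $E=\sigma(T)\cap\{|z|=1\}$ consists of points none of which is a limit point of $\sigma(T)$, and $\sigma(T)$ is compact, $E$ is finite; moreover $E$ is clopen in $\sigma(T)$ (each of its points is isolated in $\sigma(T)$, and $E$ is closed). Let $M_{E}$ be the Riesz spectral subspace for $E$ and $M_{F}$ the one for $F=\sigma(T)\setminus E$, so that $X=M_{E}\oplus M_{F}$ reduces $T$ (it is a direct sum of two operators if we are willing to replace $T$ by a similar operator, and null-orbit reflexivity is easily seen to be a similarity invariant — this should be remarked), and write $T=T_{E}\oplus T_{F}$ with $\sigma(T_{E})=E\subseteq\{|z|=1\}$ and $\sigma(T_{F})=F$. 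Since $1\notin\overline{F}$ is not the issue — rather $r(T_{F})=\max\{|z|:z\in F\}$; because $F$ is a compact subset of $\sigma(T)$ not meeting the unit circle (it omits all of $E$) and $r(T)=1$, we get $r(T_{F})<1$, so $T_{F}$ is null-orbit reflexive by the corollary to Theorem~\ref{MV} (the $r(T)\neq 1$ case), and in fact $\mathrm{nullOrbRef}(T_{F})=\mathrm{nullOrb}(T_{F})$ with $\|T_{F}^{n}\|\to 0$.

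Next I would analyze $T_{E}$. By hypothesis $T_{E}=T|M_{E}$ is algebraic, and $\sigma(T_{E})=E=\{\lambda_{1},\dots,\lambda_{m}\}$ with each $|\lambda_{j}|=1$; so the minimal polynomial of $T_{E}$ is $\prod_{j}(t-\lambda_{j})^{k_{j}}$. If some $k_{j}\geq 2$, then $\ker(T_{E}-\lambda_{j})\subsetneq\ker(T_{E}-\lambda_{j})^{2}$, whence by Lemma~\ref{last} applied to $T_{E}$ we get $\sum 1/\|T_{E}^{n}\|^{2}<\infty$; combined with $\|T_{F}^{n}\|\to 0$ this forces $\sum 1/\|T^{n}\|^{2}<\infty$ (since $\|T^{n}\|\geq\|T_{E}^{n}\|$ up to the similarity constant), and Theorem~\ref{MV} finishes the whole thing. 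So the remaining — and main — case is when every $k_{j}=1$, i.e.\ $T_{E}$ is diagonalizable with unimodular eigenvalues: $T_{E}$ is similar to a direct sum of unitaries $\lambda_{j}I$, hence $T_{E}$ is itself similar to a unitary (indeed to a normal operator), so by Proposition~\ref{nor} (and similarity-invariance) $T_{E}$ is null-orbit reflexive.

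The main obstacle is the last case: even though $T_{F}$ and $T_{E}$ are each null-orbit reflexive, null-orbit reflexivity is not obviously preserved under direct sums, so I must argue directly, and the model for this is precisely the proof of Theorem~\ref{polybdd}. Here $T_{E}$ plays the role of the unitary $U$ and $T_{F}$ (with $\|T_{F}^{n}\|\to 0$, hence $T_{F}^{n}\to 0$ in SOT, which is even better than the WOT-convergence used there) plays the role of $A$. Given $S\in\mathrm{nullOrbRef}(T)$, the reduction $X=M_{E}\oplus M_{F}$ need not reduce $S$ a priori, but the fact that $S$ maps $\mathrm{nullOrb}(T)x$-closures into themselves forces $S$ to leave the Riesz subspaces invariant (a vector $x\in M_{E}$ has $\mathrm{nullOrb}(T)x\subseteq M_{E}$, which is closed, so $Sx\in M_{E}$; similarly for $M_{F}$), so we may write $S=S_{E}\oplus S_{F}$ with $S_{E}\in\mathrm{nullOrbRef}(T_{E})$, $S_{F}\in\mathrm{nullOrbRef}(T_{F})$. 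Then $S_{F}=T_{F}^{s}$ for some $s\geq 0$ or $S_{F}=0$; I would run the two-case analysis of Theorem~\ref{polybdd} verbatim — if $S_{F}=T_{F}^{s}\neq 0$ one picks $y_{1}$ with $T_{F}^{s}y_{1}\neq 0$ and uses that $T_{F}^{n}y_{1}\to 0$ and $\ker(T_{F}^{k}-1)=0$ to pin the approximating powers $n_{k}$ eventually equal to $s$, forcing $S_{E}=T_{E}^{s}$ and hence $S=T^{s}$; if $S_{F}=0$ and $S_{E}=0$ we are done, and if $S_{F}=0$ but $S_{E}\neq 0$ one shows $T^{n}\to S$ along a net by using $T_{F}^{n}\to 0$ SOT and that $S_{E}$ lies in the SOT-closure of $\{T_{E}^{n}:n>m\}$ for every $m$ (which holds because $T_{E}$ is normal and null-orbit reflexive, with $\mathrm{nullOrb}(T_{E})y$ closed for each $y$ since $\|T_{E}^{n}y\|$ is, up to a bounded similarity factor, essentially constant). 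The compact and algebraic and finite-dimensional consequences then follow immediately: a compact operator has $r(T)=1$ or $r(T)<1$, and if $r(T)=1$ its spectrum on the unit circle consists of isolated eigenvalues of finite multiplicity which are not limit points of $\sigma(T)$, and the corresponding spectral subspace is finite-dimensional hence $T|M_{E}$ is algebraic; an algebraic operator has finite spectrum, all of it trivially satisfying the hypothesis, with $T|M_{E}$ algebraic; and finite-dimensional operators are algebraic.
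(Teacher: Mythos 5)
Your proposal is correct and follows essentially the same route as the paper: Riesz-decompose $T$ as $T_{E}\oplus T_{F}$ with $E$ finite and clopen, dispose of the case of a nontrivial Jordan structure on the unit circle via Lemma \ref{last}, and observe that otherwise $T_{E}$ is similar to a unitary while $r(T_{F})<1$. The only difference is the finish: where you re-run the two-case direct-sum analysis modeled on the proof of Theorem \ref{polybdd}, the paper simply notes that $T$ is then similar to a contraction, hence polynomially bounded, and invokes Theorem \ref{polybdd} as a black box, which makes your replication of that argument unnecessary (though not incorrect).
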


\begin{proof}
It follows from Lemma \ref{last} that we need only consider the case when
$\ker\left(  T-\lambda\right)  =\ker\left(  T-\lambda\right)  ^{2}$ for every
$\lambda\in E.$ This implies that the restriction of $T$ to $M_{E}$ is similar
to a unitary operator, and since the restriction of $T$ to $M_{\sigma\left(
T\right)  \backslash E}$ has spectral radius less than $1$, we see that $T$ is
similar to a contraction. Hence, by Theorem \ref{polybdd}, $T$ is null-orbit
reflexive. If $T$ is compact or algebraic and $r\left(  T\right)  =1,$ then
the first part of this theorem applies. If $r\left(  T\right)  \neq1,$ then
$T$ is null-orbit reflexive by Theorem \ref{MV}.
\end{proof}

\bigskip

We conclude with another question.

\bigskip

\textbf{Question 4.} Is every power bounded Hilbert space operator orbit
reflexive or null-orbit reflexive?

\end{document}